\documentclass[review,hidelinks,onefignum,onetabnum]{siamonline220329}
\nolinenumbers


\usepackage[utf8]{inputenc}
\usepackage{amsmath,amssymb,bm}
\usepackage{amsfonts}
\usepackage{mathrsfs}
\usepackage{graphicx}
\usepackage{epstopdf}
\usepackage{enumitem}
\usepackage{overpic}
\usepackage{multirow}
\usepackage{nicefrac}
\usepackage{subfig}
\usepackage{float}
\usepackage{caption}
\usepackage{multicol}
\usepackage{dsfont}
\usepackage{bbm}
\usepackage{bm}
\usepackage{url}
\usepackage{verbatim}
\usepackage{algorithm,algorithmic}
\usepackage{xcolor}
\usepackage{cleveref}
\usepackage{hyperref}
\hypersetup{
    colorlinks=true,
    linkcolor=blue,
    filecolor=blue,      
    urlcolor=blue,
    citecolor=blue,
}

\ifpdf
  \DeclareGraphicsExtensions{.eps,.pdf,.png,.jpg}
\else
  \DeclareGraphicsExtensions{.eps}
\fi



\newsiamremark{remark}{Remark}
\newsiamremark{hypothesis}{Hypothesis}
\crefname{hypothesis}{Hypothesis}{Hypotheses}
\newsiamthm{claim}{Claim}
\newsiamthm{question}{Question}

\headers{Convergence Analysis}{Zhenyue Zhang and Zhong-Heng Tan}

\title{Convergence Analysis of Discrete Conformal Transformation}


\author{
Zhenyue Zhang\thanks{Nanjing Center for Applied Mathematics, Nanjing, and Zhejiang University, Hangzhou, People’s Republic of China. (\email{zyzhang@zju.edu.cn}). Corresponding author.}
\and
Zhong-Heng Tan\thanks{School of Mathematics, Southeast University, and Nanjing Center for Applied Mathematics, Nanjing, People’s Republic of China. (\email{zhtan95@seu.edu.cn}).}
}
\date{}

\usepackage{amsopn}





\begin{document}
\maketitle
\begin{abstract}
Continuous conformal transformation minimizes the conformal energy. The convergence of minimizing discrete conformal energy when the discrete mesh size tends to zero is an open problem. This paper addresses this problem via a careful error analysis of the discrete conformal energy. Under a weak condition on triangulation, the discrete function minimizing the discrete conformal energy converges to the continuous conformal mapping as the mesh size tends to zero.   
\end{abstract}

\begin{keywords}
Conformal transformation, Discrete minimization, Convergence analysis
\end{keywords}

\begin{MSCcodes}
52C26, 49Q10, 68U05
\end{MSCcodes}

\section{Introduction}

Given a 2D Riemann manifold $\mathcal{M}\subset{\cal R}^m$ with $m\geq 2$, an allowable mapping $f:\mathcal{M}\to {\cal R}^{n}$ is called to be conformal if $f$ preserves the angle of intersection of any two intersecting arcs on $\mathcal{M}$. Conformal transformation is commonly used in computer graphics and computer aided designs such as surface restriction \cite{XGYW04,MHWW17}, minimal surface generation \cite{Hutc91,UPKP93}, image morphing \cite{XFYF15,MHWW17}, and texture mapping \cite{SHSA00,LBPS02}. 

Conformal transformation can be verified via variant methodologies \cite{EK91book}. First, if $\mathcal{M}$ can be parameterized as $\mathbf{x} = \mathbf{x}(u_1,u_2)$ with variables $(u_1,u_2)\in \Omega \subset \mathbb{R}^2$, the conformality is equivalent to the first fundamental form equation of the two surfaces
\[
    \nabla \mathbf{y}^\top \nabla \mathbf{y} 
    = \eta\nabla\mathbf{x}^\top \nabla\mathbf{x}
\]
holds for the parameterization $\mathbf{x}$ of ${\cal M}$ and the transformed 
parameterization $\mathbf{y} = f\circ\mathbf{x}$ of ${\cal N} = f({\cal M})$,
where $\eta(u_1,u_2) >0$ is a scalar function, 
$\nabla \mathbf{x} = \big[\frac{\partial\mathbf{x}}{\partial u_1},\frac{\partial\mathbf{x}}{\partial u_2}\big]$ is the gradient matrix with respective to its variables, and so is $\nabla\mathbf{y}$.\footnote{We always assume that a vector function is in column form in this paper.} 
Second, the conformality can also be characterized by the minimization of conformal energy  
\[
    \mathcal{E}_C(f)=\frac{1}{2}\int_{\mathcal{M}} \|\nabla_{\!\cal M} f(\mathbf{x})\|^2_F \text{d}\sigma - {\cal A}(f),
\]
where $\nabla_{\!\cal M} f(\mathbf{x})= \nabla (f\circ{\mathbf x})
    \big((\nabla\mathbf{x}^\top \nabla\mathbf{x})^{-1}\nabla\mathbf{x}^\top\big)\in {\cal R}^{n\times m}$ 
is the tangential gradient of $f$ on $\cal M$ and ${\cal A}(f)$ is the area of $f({\cal M})$. It is known that the conformal energy $\mathcal{E}_C(f)$ is always nonnegative for any $f$, and $f$ is conformal if and only if $\mathcal{E}_C(f) = 0$.\footnote{A proof of $\mathcal{E}_D(f)\geq {\cal A}(f)$ is given in \cite{Hutc91} for the special case when both ${\cal M}$ and ${\cal N}$ are subsets of ${\cal R}^2$.} See Appendix \ref{Appendix:CE} for a simple proof of the property.
Third, a conformal mapping can be in a compounded form $f = g_2 \circ g_1$ of two quasi-conformal mappings $g_1: \mathcal{M}_1 \to \mathcal{M}_2$ and $g_2: \mathcal{M}_2 \to \mathcal{M}_3$ such that the Beltrami differential of $g_1^{-1}$ and $g_2$ are the same \cite[Theorem 1]{PTKC15}. 
Fourth, for a simple closed surface ${\cal M}$ and $p\in{\cal M}$, the conformal mapping from ${\cal M}\setminus\{p\}$ to the ${\cal R}^2$ satisfies the inhomogeneous Laplace-Beltrami equation \cite{SHSA00}
\begin{align}\label{Laplace-Beltrami}
    \Delta_\mathcal{M} f 
    = \Big(\frac{\partial}{\partial u}, -\frac{\partial}{\partial v} \Big) \delta_p, 
\end{align}
where $\delta_p$ is a Dirac delta function at $p$, {\it i.e.}, $\delta_p(p)=\infty$ and $\delta_p=0$ otherwise, and $u,v$ are the basis of tangent space of ${\cal M}$ at $p$. The solution is unique within a reflection because of the basis choice.

Theoretically, if the target surface ${\cal N}$ is predicted and a conformal mapping from $\cal M$ to ${\cal N}$ exists, the conformal mapping can be obtained via minimizing the conformal energy, 
\begin{align}\label{prob:CP}
    \min_{f: f({\cal M})={\cal N}}\mathcal{E}_C(f) 
    = \min_{f: f({\cal M})={\cal N}}\Big\{
    \frac{1}{2}\int_{\mathcal{M}} \|\nabla_{\!\cal M} f(\mathbf{x})\|_F^2 \text{d}\sigma -\mathcal{A}(f)\Big\}.
\end{align}
The quasi-conformal approach given in \cite{PTKC15}
chooses a function $g_1$ from ${\cal M}$ to ${\cal N}$ and determines the Beltrami differential of $g_1^{-1}$ first. Then it finds a transformation $g_2:{\cal N}\to{\cal N}$ by solving the Beltrami equation with the Beltrami differential of $g_1^{-1}$. The Laplace-Beltrami equation can be solved in its weak formulation, that is, for any smooth $g$,
\begin{align}\label{LB_weak}
    \int_{\cal M}\big\langle\nabla_{\cal M}f,\nabla_{\cal M}g\big\rangle d\sigma
    = -\int_{\cal M} \langle g,\Delta_{\cal M}f\rangle d\sigma
    = -\int_{\cal M} \big\langle g,\big(\frac{\partial}{\partial u}, 
        -\frac{\partial}{\partial v} \big) \delta_p\big\rangle d\sigma
    = \big\langle \big(\frac{\partial}{\partial u}, 
        -\frac{\partial}{\partial v} \big),g \big\rangle\big|_p.
\end{align}

Given a set $V$ of vertices $\{v_\ell\}$ sampled from $\cal M$, 
disctretization approaches of pursuing a conformal mapping proposed in the literature commonly approximate the smooth surface ${\cal M}$ by a triangle surface $S_V$ and focus the mappings on the piecewise linear functions $\{f^h\}$ of the triangle surface. 
For instance, the continuous Dirichlet energy ${\cal E}_D(f) = \frac{1}{2}\int_{\cal M}\|\nabla_{\cal M}f\|^2_Fd\sigma$ can be approximated by or discretized as
$\frac{1}{2}\int_{S_V}\|\nabla_{S_V}f^h\|^2_Fd\sigma$. In \cite{XGST20book}, each triangle is represented by barycentric coordinates, and the discrete Dirichlet energy has a quadratic form
\begin{align}\label{discrete DE}
    \frac{1}{2}\int_{S_V}\|\nabla_{S_V}f^h\|^2_Fd\sigma
    = \frac{1}{2}\langle L\mathbf{f}^h,\mathbf{f}^h\rangle,
\end{align}
where $L$ is a Laplacian matrix constructed by the cotangent angles of the triangles, and $\mathbf{f}^h$ is a matrix of the rows $\{f^h(v_\ell)^\top\}$.
The weak form (\ref{LB_weak}) can also be discretized as \cite{SHSA00}
\[
    \int_{S_V}\big\langle\nabla_{S_V}f^h,\nabla_{S_V}g^h\big\rangle \text{d}\sigma 
    = \langle \big(\frac{\partial}{\partial u}, 
        -\frac{\partial}{\partial v} \big),g^h \rangle\big|_p
\]
for any piecewise linear function $g^h$. 
The Beltrami differential for the quasi-conformal mappings $f_1$ and $f_2$ is discretized by directly applying the the Beltrami differential on a piecewise liner function $f^h$ on $S_V$. 
In Appendices \ref{Appendix:DE S_V} and \ref{Appendix:weakLB}, we derive (\ref{discrete DE}) without using the barycentric coordinate, and a discrete linear equation $L\mathbf{f}^h = \mathbf{b}$ equivalent to the discrete weak form, in a simple way. 

The algorithm CCEM given in \cite{YCWW21} computes an approximately discrete conformal mapping $f^h$ by solving 
\begin{align}\label{prog:DCE_S}
    \min_{f_\ell\in {\cal N}} \Big\{
    \frac{1}{2}\langle L\mathbf{f}^h,\mathbf{f}^h\rangle - A\big(f^h\big)\Big\}
\end{align}
when the target surface ${\cal N}$ is a unit disk $D\subset{\cal R}^2$, where $A\big(f^h\big)$ is the area of $f^h(S_V)$. Here the continuous conformal energy is discretized as $E_C(f^h) 
    = \frac{1}{2}\langle L\mathbf{f}^h,\mathbf{f}^h\rangle - A(f^h)$.
The CCEM solves (\ref{prog:DCE_S}) via quasi-Newton method, starting with an initial guess from a discrete weak solution of the inhomogeneous Laplace-Beltrami equation. The algorithm FDCP \cite{PTLM15} and LDCP \cite{GPLM18} are based on the discrete Beltrami equation to compute an estimated conformal mapping from the surface $\cal M$ to the unit disk in a couple form of two discrete quasi-conformal mappings. 

When a discrete algorithm is used for computing a discrete conformal mapping via minimizing a discrete conformal energy $\mathcal{E}_C(f^h)$, it is naturally assumed that (1) the discrete error is ignorable and, (2) the discrete error will tend to zero when the number of vertices tends to infinity and the mesh size $h$ tends to zero. Unfortunately, without conditions, 
\[
    \min_{f^h: f^h(S_V)\in{\cal N}}\lim_{h\to 0} \mathcal{E}_C(f^h)
    \neq \lim_{h\to 0} \min_{f^h: f^h(S_V)\in{\cal N}}\mathcal{E}_C(f^h).
\]
Even if the equality holds, it is also not clear whether the discrete solution tends to a conformal mapping as the mesh size $h$ tends to zero.
To our knowledge, no conditions are given in the literature for guaranteeing the convergence, except \cite{Hutc91} where an analysis on the error estimation was given when both $\cal M$ and $\cal N$ are subdomains on the plane, {\it i.e.}, $m=n=2$. Practically, if the discrete conformal mapping $f^h$ is computed via minimizing a discrete conformal energy $\mathcal{E}_C(f^h)$, two key questions should be addressed: 

\begin{question}
Given a discrete mapping $f^h$ from a given set of vertices $\{v_\ell\}\subset\cal M$ to $\cal N$, is there a continuous mapping $f: \cal M\to\cal N$ such that the error between the continuous conformal energy ${\cal E}_C(f)$ and the discrete conformal energy ${\cal E}_C(f^h)$, denoted as
\[
    \varepsilon_C(f^h) = \mathcal{E}_C(f) -\mathcal{E}_C(f^h),
\]
could be estimated? or how small is the discrete error of conformal energy?
\end{question}

\begin{question}
Assume that $f^h$ minimizes the discrete conformal energy ${\cal E}_C(f^h)$.
If the mesh size $h$ tends to zero, does the discrete error $\varepsilon_C(f^h)$ tend to zero? If yes, does $f^h$ converge to a conformal mapping?
\end{question}

In this paper, we try to address these two questions from the view of continuous mapping $f$, rather than from the view of piecewise linear mapping $f^h$. Practically, we will drive a discretization ${\cal E}_C^h(f)$ of the conformal energy ${\cal E}_C(f)$ for arbitrary continuous mapping $f:\cal M\to\cal N$. Here, ${\cal E}_C^h(f)$ differs from the discrete energy ${\cal E}_C(f^h)$ defined by a discrete mapping $f^h$ directly such as the objective function in (\ref{prog:DCE_S}), though a continuous mapping $f$ can also generate a discrete $f^h$ with $f^h(v_\ell) = f(v_\ell)$ at all the vertices.
We will give a careful analysis on the discrete error $\varepsilon_C^h(f) = \mathcal{E}_C(f) -\mathcal{E}_C^h(f)$. It consists of two estimations: 

(1) The error estimation of discretizing the smooth surface $\cal M$. The surface $\cal M$ is partitioned to a sequence pieces $\{{\cal M}_{ijk}\}$ corresponding to the triangulation $\{V_{ijk}\}$. We focus on the distance of an arbitrary point $\mathbf{x}\in{\cal M}_{ijk}$ to the triangle plane of $V_{ijk}$, together with the approximation of the triangle plane of $V_{ijk}$ to the tangent space of ${\cal M}$ at $\mathbf{x}$.  

(2) The error estimation of the discrete conformal energy based on the surface discretization. The function $\|\nabla\!_{\cal M}f(\mathbf{x})\|_F^2$ for $\mathbf{x}\in{\cal M}_{ijk}$ is approximated as a constant in terms of the values $f(v_i)$, $f(v_j)$, and $f(v_k)$, and the approximation error $\epsilon_{ijk}(\mathbf{x})$ is carefully bounded. Then, the continuous conformal energy is discretized with an error $\varepsilon_C^h(f)$ in a sum of all the integration errors over the pieces ${\cal M}_{ijk}$. 

The error analysis addresses Question 1 for the discrete mappings $\{f^h\}$ generated from continuous mappings $\{f\}$. Furthermore, we will prove that under a weak condition on the triangulation, the discrete error $\varepsilon_C^h(f)$ tends to zero as the mesh size $h\to 0$ for any smooth $f: {\cal M}\to \cal N$. More importantly,  we will prove that 
\[
    \min_{f({\cal M})={\cal N}} \lim_{h\to 0}\mathcal{E}_C^h(f)
    = \lim_{h\to 0} \min_{f({\cal M})={\cal N}}\mathcal{E}_C^h(f),
\]
which addresses Question 2 perfectly. The condition is weaker than the quasi-uniform condition given in \cite{Hutc91} for the convergence of discrete conformal mapping between two 2D sets.

{\bf Notation}. 
${\cal A}(\cdot)$ denotes the area of a predicted smooth surface as $\cal M$, while $A(\cdot)$ is the area of triangle surface as $S_V$. 
Given three vertices $v_i,v_j,v_k$ in a space, $V_{ijk}$ is the triangle and $v_{ijk} = [v_i,v_j,v_k]$ is a three order matrix. We also use $v_{ij} = v_i-v_j$ as a directed edge from $v_i$ to $v_j$ for simplicity. $d(V_{ijk}) = \max\{\|v_{ij}\|,\|v_{jk}\|,\|v_{ki}\|\}$ is the diameter of $V_{ijk}$ in Euclidean norm.
The angle of $V_{ijk}$ opposite the edge $v_{ij}$ is denoted by $\beta_{ij}$. Notice that $\beta_{ji}$ is the angle opposite to the edge $v_{ji}$ in the triangle $V_{jik'}$.

\section{Error Analysis of Surface Discretization}
\label{sec:discrete surface}

Suppose $\cal M$ is a smooth simple surface embedded in ${\cal R}^m$ with $m\geq 2$ and it can be parameterized as a column  vector function $\mathbf{x} = \mathbf{x}(\omega)$ with $\omega\in \Omega\subset{\cal R}^2$. 
Let $\mathcal{V} = \{v_1, \cdots, v_N\}$ be a set of vertices sampled from $\cal M$, {\it i.e.}, $\{v_i = \mathbf{x}(\omega_i)\}$, and let $V_{ijk}$ be an arbitrary triangles of the vertices $v_i,v_j,v_k$ not containing any other vertices inside, and let $\pi(V_{ijk})$ be the plane containing $V_{ijk}$. The vertices $v_i,v_j,v_k$ has the anticlockwise order in the right-hand system corresponding to the predicted normal vector $\mathbf{n}$. The triangle in the parameter space, corresponding to $V_{ijk}$, is denoted as $\Omega_{ijk}$ with the 2D vertices $\omega_i,\omega_j,\omega_k$. Furthermore, each $\Omega_{ijk}$ also determines a subsurface ${\cal M}_{ij} = \big\{\mathbf{x}(\omega):\ \omega\in\Omega_{ijk}\big\}$ of $\cal M$. Hence,  
${\cal M}$ could be partitioned as a union of $\{{\cal M}_{ijk}\}$ without overlaps. 

In this section, we give an error bound between the surface $\cal M$ and the triangle surface $S_V$, focusing on the distance of an arbitrary $\mathbf{x}\in{\cal M}_{ijk}$ to its orthogonal projection onto the triangle plane $\pi(V_{ijk})$, and the error between an arbitrary tangent space of ${\cal M}_{ijk}$ and $\pi(V_{ijk})$. For simplicity, we will use 
\[
    T_{ijk} = A(\Omega_{ijk}), \quad A_{ijk} = A(V_{ijk}),\quad
    {\cal A}_{ijk} = {\cal A}({\cal M}_{ijk}).
\]
For an arbitrary point $\mathbf x\in\cal M$, $Q(\mathbf{x})=\nabla \mathbf{x}
    (\nabla \mathbf{x}^\top\nabla \mathbf{x})^{-1/2}$ is an orthogonal basis matrix.

\subsection{Barycentric Coordinate}\label{sec:barycentric cond}

Let us consider an arbitrary point $\mathbf{x}\in \mathcal{M}_{ijk}$ and let $p(\mathbf{x})$ be its orthogonal projection onto the triangle plane $\pi(V_{ijk})$. The projected point can be represented as a linear combination of the vertices $v_i,v_j,v_k$,
\begin{align}\label{def:p}
    p = a_iv_i + a_jv_j + a_kv_k = v_{ijk}a,
    \quad a_i + a_j + a_k = 1,
\end{align}
where $a_i,a_j,a_k$ are the barycentric coordinates of $p$, uniquely depends on $p$, and nonnegative if $p\in V_{ijk}$ \cite{DRoland08}.
Practically, let $\mathbf{n}_{ijk}$ be the unit normal vector of $\pi(V_{ijk})$ in a right-hand system. It is known that the exterior products between the directed edges $v_{ij}$, $v_{jk}$, and $v_{ki}$ satisfies
\[
    v_{ij}\times v_{jk} = v_{jk}\times v_{ki} = v_{jk}\times v_{ij} = A_{ijk}\mathbf{n}_{ijk}. 
\]
By $a_i + a_j + a_k = 1$, $p-v_j= a_iv_{ij}-a_kv_{jk}$. Thus, the inner product between $s_i = v_{jk}\times \mathbf{n}_{ijk}$ and $p-v_j$ is 
\[
    \langle s_i,p-v_j\rangle
    = a_i \langle v_{jk}\times\mathbf{n}_{ijk},v_{ij}\rangle
    = a_i \langle \mathbf{n}_{ijk},v_{ij}\times v_{jk}\rangle
    = 2A_{ijk}a_i.
\]
Similarly, $2A_{ijk}a_j = \langle s_j,p-v_k\rangle$ and $2A_{ijk}a_k = \langle s_k,p-v_i\rangle$, where $ s_j=v_{ki}\times \mathbf{n}_{ijk}$ and $s_k=v_{ij}\times \mathbf{n}_{ijk}$.
Hence, the barycentric coordinates are uniquely determined as
\begin{align}\label{a}
    a_i 
    = \langle b_i,p-v_j\rangle,\quad
    a_j 
    = \langle b_j,p-v_k\rangle,\quad
    a_k 
    = \langle b_k,p-v_i\rangle,
\end{align}
where $b_i = \frac{s_i}{2A_{ijk}}$, $b_j = \frac{s_j}{2A_{ijk}}$, $b_k = \frac{s_k}{2A_{ijk}}$. 

The simple representation shows that $a = (a_i,a_j,a_k)^\top$ depends on $p$ smoothly. Taking $a = a(p)$ as a mapping from $\pi(V_{ijk})$ to ${\cal R}^3$, its gradient matrix with respect to $p$ is 
\begin{align}\label{nabla a}
    \nabla a(p) 
    = b_{ijk}^\top, \quad b_{ijk} = [b_i,b_j,b_k].
\end{align}
On the other hand, as an orthogonal projection of $\mathbf{x}\in\cal M$ onto the triangle plane, $p=p(\mathbf{x})$ depends on $\mathbf{x}$ continuously. Hence, the projection can be represented as 
\begin{align}\label{p}
    p(\mathbf{x}) =\mathbf{x}-\tau(\mathbf{x})\mathbf{n}_{ijk},\quad
    \tau(\mathbf{x}) = \mathbf{n}_{ijk}^\top\big(\mathbf{x}-p(\mathbf{x})\big).
\end{align}
Since we can also represent
$p(\mathbf{x}) = p_0+(I-\mathbf{n}_{ijk}\mathbf{n}_{ijk}^\top)(\mathbf{x}-p_0)$ with a fixed point $p_0\in \pi(V_{ijk})$, we have
\begin{align}\label{nabla p}
    \nabla p(\mathbf{x}) = I-\mathbf{n}_{ijk}\mathbf{n}_{ijk}^\top,\quad
    \nabla \tau(\mathbf{x}) = \mathbf{n}_{ijk}^\top\big(I-\nabla p(\mathbf{x})\big)
    =\mathbf{n}_{ijk}^\top.
\end{align}
Thus, barycentric coordinate vector $a = a\big(p(\mathbf{x})\big)$, as a compounded mapping from $\mathbf{x}$ to ${\cal R}^3$, has the gradient with respect to $\mathbf{x}$
as
\begin{align}\label{nabla ax}
    \nabla a(p(\mathbf{x})) 
    = \nabla a(p) \nabla p(\mathbf{x})
    = b_{ijk}^\top, \quad
    \forall \mathbf{x}\in {\cal M}_{ijk},
\end{align}
the same as the gradient with respect to $p$ since $s_{\ell}^\top \mathbf{n}_{ijk} = 0$ by (\ref{nabla a}).

In the next two subsections, we will give two upper bounds for the distance $\tau = \tau(\mathbf{x})$ and the error between the tangent space of ${\cal M}_{ijk}$ at $\mathbf{x}$ and the triangle plane $\pi(V_{ijk})$, measured by $\mathbf{n}_{ijk}^\top Q(\mathbf{x})$ with the orthogonal basis of the tangent space $Q(\mathbf{x}) =\nabla \mathbf{x}(\nabla \mathbf{x}^\top\nabla \mathbf{x})^{-1/2}$. In the next subsection we will estimate $|\tau(\mathbf{x})|$ and $\|\mathbf{n}_{ijk}^\top Q(\mathbf{x})\|_2$, based on the representation (\ref{nabla ax}).

\subsection{The projection distance and tangent space error}

We need some preparing lemmas for getting the error bounds. The first one gives a simple upper bound of $\tau(\mathbf{x})$, assuming that the gradient $\nabla\mathbf{x}(\omega)$ is Lipschitz continuous.   
\begin{align}\label{L-cond:M}
    \|\nabla\mathbf{x}(\omega)-\nabla\mathbf{x}(\omega')\|_F
    \leq C_{\cal M}\|\omega-\omega'\|.
\end{align}

\begin{lemma}\label{lma:tau}
If $\nabla\mathbf{x}(\omega)$ is Lipschitz continuous with Lipschitz constant $C_{\cal M}$, then
\begin{align}\label{bound:tau}
    \max_{\omega\in\Omega_{ijk}}|\tau(\mathbf{x})|\leq 
    C_{\cal M}d^2(\Omega_{ijk}).
\end{align}
\end{lemma}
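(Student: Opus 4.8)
The plan is to reduce the claimed bound on the projection distance $|\tau(\mathbf{x})|$ to a standard error estimate for piecewise‑linear interpolation over the parameter triangle $\Omega_{ijk}$, and then invoke the Lipschitz condition (\ref{L-cond:M}). First I would record the elementary fact that, since $p(\mathbf{x})$ is the orthogonal projection of $\mathbf{x}$ onto the plane $\pi(V_{ijk})$, equation (\ref{p}) gives $|\tau(\mathbf{x})| = \|\mathbf{x}-p(\mathbf{x})\|_2 = \mathrm{dist}\big(\mathbf{x},\pi(V_{ijk})\big)$; hence $|\tau(\mathbf{x})| \le \|\mathbf{x}-y\|_2$ for \emph{any} $y$ lying in that plane. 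I would then choose $y$ cleverly: for $\omega\in\Omega_{ijk}$ with barycentric coordinates $\lambda_i,\lambda_j,\lambda_k\ge0$, $\lambda_i+\lambda_j+\lambda_k=1$, take $y=\Pi(\omega):=\lambda_iv_i+\lambda_jv_j+\lambda_kv_k$. Since $v_i,v_j,v_k\in\pi(V_{ijk})$, the point $\Pi(\omega)$ lies in $V_{ijk}\subset\pi(V_{ijk})$, so $|\tau(\mathbf{x}(\omega))|\le\|\mathbf{x}(\omega)-\Pi(\omega)\|_2$; that is, the projection distance is dominated by the linear‑interpolation error of $\mathbf{x}$ at $\omega$.

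Next I would estimate this interpolation error by the usual $C^{1,1}$ computation. Using $\sum_\ell\lambda_\ell=1$ one has $\mathbf{x}(\omega)-\Pi(\omega)=\sum_{\ell}\lambda_\ell\big(\mathbf{x}(\omega)-\mathbf{x}(\omega_\ell)\big)$. Taylor‑expanding $\mathbf{x}(\omega_\ell)$ about $\omega$ and integrating (\ref{L-cond:M}) along the segment $[\omega,\omega_\ell]$ gives $\mathbf{x}(\omega_\ell)-\mathbf{x}(\omega)=\nabla\mathbf{x}(\omega)(\omega_\ell-\omega)+R_\ell$ with $\|R_\ell\|_2\le C_{\cal M}\|\omega_\ell-\omega\|^2$ (in fact $\le\tfrac12 C_{\cal M}\|\omega_\ell-\omega\|^2$). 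The barycentric weights make the first‑order terms cancel, $\sum_\ell\lambda_\ell(\omega_\ell-\omega)=0$, so $\mathbf{x}(\omega)-\Pi(\omega)=-\sum_\ell\lambda_\ell R_\ell$ and therefore $\|\mathbf{x}(\omega)-\Pi(\omega)\|_2\le\sum_\ell\lambda_\ell C_{\cal M}\|\omega_\ell-\omega\|^2\le C_{\cal M}d^2(\Omega_{ijk})$, using $\|\omega_\ell-\omega\|\le d(\Omega_{ijk})$ and $\sum_\ell\lambda_\ell=1$. Maximizing over $\omega\in\Omega_{ijk}$ then yields (\ref{bound:tau}) — indeed with the improved constant $\tfrac12 C_{\cal M}$ if one keeps the sharper remainder.

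I do not expect a genuine obstacle here; the only thing that needs care is to exploit the three vertices lying on $\pi(V_{ijk})$ through the \emph{barycentric‑weighted} expansion, so that the first‑order terms cancel. If instead one expanded around a single (say, nearest) vertex $v_\ell$, one would be left with a term $\mathbf{n}_{ijk}^\top\nabla\mathbf{x}(\omega_\ell)(\omega-\omega_\ell)$ that is only $O(d^2)$ after separately bounding $\|\mathbf{n}_{ijk}^\top\nabla\mathbf{x}\|$ — precisely the estimate of the next subsection — which would make the argument circular. Equivalently, one may phrase everything in terms of the scalar function $\tilde\tau(\omega):=\tau(\mathbf{x}(\omega))=\mathbf{n}_{ijk}^\top(\mathbf{x}(\omega)-v_i)$, which by (\ref{p}) is affine in $\mathbf{x}$, has $C_{\cal M}$‑Lipschitz gradient $\mathbf{n}_{ijk}^\top\nabla\mathbf{x}(\omega)$, and vanishes at $\omega_i,\omega_j,\omega_k$ because $v_i,v_j,v_k\in\pi(V_{ijk})$; applying the same weighted Taylor cancellation to $\tilde\tau$ gives (\ref{bound:tau}) directly. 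The remaining work is routine bookkeeping with the Lipschitz‑gradient Taylor remainder.
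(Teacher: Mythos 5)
Your proposal is correct and follows essentially the same route as the paper: the paper also takes the barycentric-weighted point $\tilde p=\sum_\ell\alpha_\ell v_\ell\in V_{ijk}$, expands each $v_\ell=\mathbf{x}(\omega_\ell)$ about $\omega$ so that the first-order terms cancel and $\tilde p-\mathbf{x}=\sum_\ell\alpha_\ell g_\ell$ with $\|g_\ell\|\le C_{\cal M}d^2(\Omega_{ijk})$, and then bounds $|\tau(\mathbf{x})|=\min_{p'\in\pi(V_{ijk})}\|p'-\mathbf{x}\|\le\|\tilde p-\mathbf{x}\|$. Your remark that the integral form of the remainder yields the sharper constant $\tfrac12 C_{\cal M}$ is a valid minor improvement over the paper's mean-value-form estimate.
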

\begin{proof}
Consider the extension of $\mathbf{x}= \mathbf{x}(\omega)$ for $\omega\in\Omega_{ijk}$, $v_\ell = \mathbf{x}(\omega_\ell) 
    = \mathbf{x}+\nabla \mathbf{x}(\omega'_\ell)(\omega_\ell-\omega)$. We rewrite
\begin{align}\label{v_ell}
    v_\ell = \mathbf{x}(\omega_\ell) 
    = \mathbf{x}+\nabla \mathbf{x}(\omega)(\omega_\ell-\omega)+g_\ell,
\end{align}
where $g_\ell = \big(\nabla \mathbf{x}(\omega'_\ell)-\nabla \mathbf{x}(\omega)\big)(\omega_\ell-\omega)$. By the Lipschitz condition,
\begin{align}\label{g}
    \|g_\ell\|_2 
    \leq C_{\cal M}\|\omega'_\ell-\omega\|\|\omega_\ell-\omega\|
    \leq C_{\cal M}d^2(\Omega_{ijk}).
\end{align}

Furthermore, let $\alpha_i,\alpha_j,\alpha_k$ be the barycentric coordinates of $\omega\in \Omega_{ijk}$,
{\it i.e.}, $\omega = \alpha_i\omega_i+\alpha_j\omega_j+\alpha_k\omega_k$, and let
$\tilde p = \alpha_iv_i+\alpha_jv_j+\alpha_kv_k\in V_{ijk}$. By (\ref{v_ell}), $\tilde p- \mathbf{x}=\sum \alpha_\ell g_\ell$. Hence, using (\ref{g}),
\begin{align}\label{tau}
    |\tau(\mathbf{x})| 
    =\min_{p'\in \pi(V_{ijk})}\|p'-\mathbf{x}\|_2
    \leq\|\tilde p- \mathbf{x}\|_2
    \leq \sum_\ell\alpha_\ell\|g_\ell\|_2
    \leq C_{\cal M}d^2(\Omega_{ijk}),
\end{align}
completing the proof.
\end{proof}

For bounding $\mathbf{n}_{ijk}^\top Q(\mathbf{x})$ in norm, we also use the decomposition (\ref{p}) and (\ref{v_ell}) to get
\[
    v_{ijk}-pe^\top-\tau(\mathbf{x})\mathbf{n}_{ijk}e^\top
    = v_{ijk}-\mathbf{x}e^\top
    = \nabla \mathbf{x}\hat\omega_{ijk}+g_{ijk},
\]
where $g_{ijk} = [g_i,g_j, g_k]$ and 
$\hat\omega_{ijk}=\hat\omega_{ijk}(\omega) = \omega_{ijk}-\omega e^\top$. 
By 
\[
    \mathbf{n}_{ijk}^\top\nabla \mathbf{x}\hat\omega_{ijk}
    = -\tau(\mathbf{x})e^\top-\mathbf{n}_{ijk}^\top g_{ijk},
\]
since $\mathbf{n}_{ijk}^\top(v_{ijk}-pe^\top)=0$,
hence, for the orthogonal basis $Q(\mathbf{x})=\nabla \mathbf{x}
    (\nabla \mathbf{x}^\top\nabla \mathbf{x})^{-1/2}$, 
\[
    \mathbf{n}_{ijk}^\top Q(\mathbf{x})
    = \mathbf{n}_{ijk}^\top \nabla \mathbf{x}
        (\nabla \mathbf{x}^\top\nabla\mathbf{x})^{-1/2}
    = -\big(\tau(\mathbf{x})e^\top+\mathbf{n}_{ijk}^\top g_{ijk}\big)
    \hat\omega_{ijk}^{\dag} (\nabla \mathbf{x}^\top\nabla\mathbf{x})^{-1/2}.
\]
Since $\|(\nabla \mathbf{x}^\top\nabla\mathbf{x})^{-1/2}\|_2 = 1/\sigma_{\min}(\nabla\mathbf{x})$, $\|g_{ijk}\|_2\leq \sqrt{3}C_{\cal M}d^2(\Omega)$,
and by the bound (\ref{tau}), we get
\begin{align}\label{nQ}
    \|\mathbf{n}_{ijk}^\top Q(\mathbf{x})\|_2
    &\leq \frac{2\sqrt{3}C_{\cal M}}{\sigma_{\min}(\nabla\mathbf{x})}
        d^2(\Omega_{ijk})\|\hat\omega_{ijk}^{\dag}\|_2.
\end{align}

Furthermore, we need the lemma to bound $\|\hat\omega_{ijk}^{\dag}\|_2$ that is the inverse of square root of the smallest eigenvalue of $(\omega_{ijk}-\omega e^\top)(\omega_{ijk}-\omega e^\top)^\top$. The following lemma gives a general result for the eigenvalues of $B(x) = (C-xe^\top)(C-xe^\top)^\top$ for arbitrary matrix $C\in {\cal R}^{m\times n}$ and $x\in {\cal R}^n$. 

\begin{lemma}\label{lma:eig}
Let $C \in {\cal R}^{m\times n}$ and $B(x) = (C-xe^\top)(C-xe^\top)^\top$ for arbitrary $x\in {\cal R}^n$. Then each eigenvalue $\lambda_\ell(x)$ of $B(x)$ achieves its minimum at the mean $c_*$ of columns of $C$.
\end{lemma}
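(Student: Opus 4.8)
The plan is to reduce the whole statement to a single algebraic identity obtained by expanding $B(x)$ and completing the square in $x$. Write $e=(1,\dots,1)^\top\in{\cal R}^n$, let $c_1,\dots,c_n$ be the columns of $C$, and let $c_*=\frac1n Ce$ be their mean. A direct expansion gives
\[
  B(x)=(C-xe^\top)(C^\top-ex^\top)
       =CC^\top-(Ce)x^\top-x(Ce)^\top+(e^\top e)\,xx^\top
       =CC^\top-n\,c_*x^\top-n\,xc_*^\top+n\,xx^\top .
\]
Adding and subtracting $n\,c_*c_*^\top$ turns the cross terms into a perfect square, so that
\[
  B(x)=\big(CC^\top-n\,c_*c_*^\top\big)+n\,(x-c_*)(x-c_*)^\top
      =B(c_*)+n\,(x-c_*)(x-c_*)^\top ,
\]
where the identification of the first summand with $B(c_*)$ follows by plugging $x=c_*$ into the expansion (equivalently, $B(c_*)=(C-c_*e^\top)(C-c_*e^\top)^\top=\sum_{\ell}(c_\ell-c_*)(c_\ell-c_*)^\top$), and the second equality is checked by re-expanding $n(x-c_*)(x-c_*)^\top$.

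With this identity in hand, the remaining step is a standard eigenvalue monotonicity argument. For every $x$ the matrix $n\,(x-c_*)(x-c_*)^\top$ is positive semidefinite, so $B(x)\succeq B(c_*)$. By Weyl's monotonicity inequality — equivalently, the Courant–Fischer characterization $\lambda_\ell=\min_{\dim S=\ell}\max_{u\in S,\ \|u\|=1}u^\top B u$ — adding a positive semidefinite matrix cannot decrease any eigenvalue, hence $\lambda_\ell(B(x))\ge\lambda_\ell(B(c_*))$ for every index $\ell$ and every $x$. Since equality is attained at $x=c_*$, each eigenvalue $\lambda_\ell(x)$ of $B(x)$ attains its minimum over $x\in{\cal R}^n$ at the column mean $c_*$, which is the assertion. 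If one prefers a self-contained version that avoids quoting Weyl's theorem, one can argue at the level of quadratic forms: for any unit vector $u$, $u^\top B(x)u=u^\top B(c_*)u+n\langle u,x-c_*\rangle^2\ge u^\top B(c_*)u$, and then invoke the min–max formula to pass from quadratic forms to eigenvalues.

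I do not expect any real obstacle here: the only points requiring care are the bookkeeping in the expansion (tracking $e^\top e=n$ and the symmetry of the two cross terms) and stating the correct form of eigenvalue monotonicity under a positive semidefinite perturbation. It is worth recording, for the subsequent use of this lemma in bounding $\|\hat\omega_{ijk}^{\dag}\|_2$ in (\ref{nQ}), that $B(c_*)$ is precisely the scatter matrix $\sum_\ell (c_\ell-c_*)(c_\ell-c_*)^\top$ of the columns of $C$ about their mean, so the smallest eigenvalue of $B(x)$ is controlled by how non-degenerately the columns of $C$ are spread around $c_*$.
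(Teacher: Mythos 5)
Your proof is correct, and it takes a genuinely different route from the paper's. The paper argues variationally: it assumes a differentiable unit eigenvector field $u_\ell(x)$, differentiates the eigen-equation $B(x)u_\ell=\lambda_\ell u_\ell$, invokes first-order optimality at a minimizer $x_*^{(\ell)}$ to get $(x_*^{(\ell)}-c_*)^\top u_\ell=0$, concludes that each minimal value is an eigenvalue of $B(c_*)$, and finishes with an interlacing argument to match the ordering. You instead complete the square to obtain the exact identity $B(x)=B(c_*)+n\,(x-c_*)(x-c_*)^\top$ and then apply Weyl monotonicity (or directly the Courant--Fischer min--max on quadratic forms). Your argument is more elementary and, in fact, more robust: it does not rely on the existence of a minimizer or on the differentiability of eigenvectors (which is delicate where eigenvalues cross, and is simply asserted in the paper), and it yields the quantitative strengthening $\lambda_\ell(B(x))\ge\lambda_\ell(B(c_*))$ for all $x$ at once. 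One bookkeeping remark: for $C\in{\cal R}^{m\times n}$ and $e\in{\cal R}^n$, the shift $x$ (and hence $c_*$) lives in ${\cal R}^m$, and the scalar in the rank-one term is $e^\top e=n$; your computation has these right, whereas the lemma's statement ``$x\in{\cal R}^n$'' and the paper's factor $m$ in $\partial B/\partial x_t$ are typos (in the application $m=2$, $n=3$).
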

\begin{proof}
Let $\lambda_1(x)\leq \cdots\leq \lambda_m(x)$. It is known that for each $\ell$, there is a differentiable unit eigenvector $u_\ell=u_\ell(x)\in {\cal R}^n$ of $B(x)$ with respect to $x$, corresponding to $\lambda_\ell = \lambda_\ell(x)$. Taking derivatives on the two sides of the eigen-equation $B(x)u_\ell(x) =\lambda_\ell(x) u_\ell(x)$ with respect to each component $x_t$ of $x$, we get that
\begin{align}\label{deriv_eig}
    \frac{\partial B}{\partial x_t} u_\ell
    +B\frac{\partial u_\ell}{\partial x_t}
    =\frac{\partial \lambda_\ell}{\partial x_t}u_\ell
    +\lambda_\ell \frac{\partial u_\ell}{\partial x_t}.
\end{align}

Let $x_*^{(\ell)}$ be the minimizer of $\lambda_\ell(x)$. 
Taking the derivative with respect to the component $x_t$ of $x$ on the two sides of the eigen-equation $B(x)u_\ell(x) =\lambda_\ell(x) u_\ell(x)$, and using $\frac{\partial \lambda_\ell}{\partial x}\big|_{x=x_*^{(\ell)}} = 0$, we obtain that $u_\ell^\top\frac{\partial B}{\partial x_t} u_\ell\big|_{x=x_*^{(\ell)}} =0$. Since $\frac{\partial B}{\partial x_t}=m\big(e_t (x-c_*)^\top+(x-c_*)e_t^\top\big)$, the equation $u_\ell^\top\frac{\partial B}{\partial x_t} u_\ell\big|_{x=x_*^{(\ell)}} =0$ becomes  
\[
   (x_*^{(\ell)}-c_*)^\top u_\ell= 0,\quad
   {\rm where}\quad u_\ell = u_\ell(x_*^{(\ell)}).
\]
Hence, by $B(x) = CC^\top -mc_*x^\top+mx(x-c_*)^\top$, we see that
\[
    \lambda_\ell(x_*^{(\ell)}) u_\ell
    = B(x_*^{(\ell)})u_\ell 
    = (CC^\top -mc_*c_*^\top)u_\ell
    = B(c_*)u_\ell.
\]
It implies that each minimum $\lambda_\ell(x_*^{(\ell)})$ is also an eigenvalue of $B(c_*)$ and $u_\ell = u_\ell(x_*^{(\ell)})$ is the corresponding eigenvector. 
Finally, 
\[
    \lambda_{\ell-1}(c_*^{(\ell-1)})
    = \min_x\lambda_{\ell-1}(x)
    \leq \lambda_{\ell-1}(c_*^{(\ell)})
    \leq \lambda_\ell(c_*^{(\ell)}), \quad \ell=2,3,\cdots,m.
\]
That is, as $\lambda_1(c_*)\leq \cdots\leq \lambda_m(c_*)$, $\lambda_1(c_*^{(1)})\leq \lambda_2(c_*^{(2)})\leq \cdots\leq \lambda_m(c_*^{(m)})$ are also the $m$ ordered eigenvalues of $B(c_*)$. Hence, $\lambda_\ell(c_*^{(\ell)}) = \lambda_\ell(c_*)$ for each $\ell$.
\end{proof}

Applying Lemma \ref{lma:eig} on $B=\hat\omega_{ijk}(\omega)\hat\omega_{ijk}^\top(\omega)$ 
and using $\|\hat\omega_{ijk}^{\dag}(\omega)\|_2 = \frac{1}{\sqrt{\lambda_1(\omega)}}$, 
where $\lambda_1(\omega)$ is the smallest eigenvalue of $\hat\omega_{ijk}(\omega)\hat\omega_{ijk}^\top(\omega)$, we have 
\begin{align}\label{bound:hat_omega}
    \|\hat\omega_{ijk}^{\dag}(\omega)\|_2 
    = \frac{1}{\sqrt{\lambda_1(\omega)}}
    \leq\frac{1}{\sqrt{\lambda_1(\omega_*)}}
    = \|\hat\omega_{ijk}^{\dag}(\omega_*)\|_2,
\end{align}
where $\omega_*=\frac{1}{3}\omega_{ijk}e$ by Lemma \ref{lma:eig}. Furthermore, $\|\hat\omega_{ijk}^{\dag}(\omega_*)\|_2$ has a simple form.

\begin{lemma}\label{lma: hat_omega_{ijk}}
For the triangle $\Omega_{ijk}$ with vertices $\omega_i,\omega_j,\omega_k$,
\begin{align}\label{maxW}
    \max_{\omega\in\Omega_{ijk}}\|\hat\omega_{ijk}^{\dag}(\omega)\|_2
    =\frac{\|[\omega_{jk},\omega_{ki},\omega_{ij}]\|_2}{2T_{ijk}}.
\end{align}
\end{lemma}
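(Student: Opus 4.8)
The plan is to reduce, via the inequality (\ref{bound:hat_omega}) already proved, to evaluating $\|\hat\omega_{ijk}^{\dag}(\omega_*)\|_2$ at the centroid $\omega_*=\tfrac13\omega_{ijk}e$, which lies in $\Omega_{ijk}$, so that the maximum is attained there. I abbreviate $W=\hat\omega_{ijk}(\omega_*)=[\,\omega_i-\omega_*,\ \omega_j-\omega_*,\ \omega_k-\omega_*\,]\in{\cal R}^{2\times3}$ and $E=[\,\omega_{jk},\ \omega_{ki},\ \omega_{ij}\,]\in{\cal R}^{2\times3}$, the matrix appearing on the right of (\ref{maxW}). Because $\omega_*$ is the centroid we have $We=0$, and also $Ee=\omega_{jk}+\omega_{ki}+\omega_{ij}=0$; when $\Omega_{ijk}$ is nondegenerate ($T_{ijk}>0$) both $W$ and $E$ have full row rank $2$, so $\|W^{\dag}\|_2=1/\sigma_{\min}(W)$, and the smallest singular value $\sigma_{\min}(E)$ is positive.

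The key algebraic fact is the identity $EE^\top=3\,WW^\top$. To obtain it, write each edge vector through the centred vertices, $\omega_{jk}=(\omega_j-\omega_*)-(\omega_k-\omega_*)$ and cyclically, which gives $E=WM$ with the skew-symmetric matrix
\[
    M=\begin{pmatrix}0&-1&1\\ 1&0&-1\\ -1&1&0\end{pmatrix},
\]
and a one-line computation yields $MM^\top=-M^2=3I-ee^\top$. Hence $EE^\top=W(3I-ee^\top)W^\top=3\,WW^\top-(We)(We)^\top=3\,WW^\top$, the last step using $We=0$. Consequently the two eigenvalues of $EE^\top$ are exactly three times those of $WW^\top$, so $\sigma_{\min}(E)=\sqrt3\,\sigma_{\min}(W)$ and $\sigma_{\max}(E)=\sqrt3\,\sigma_{\max}(W)$.

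Next I evaluate $\det(EE^\top)$ by the Cauchy--Binet formula: it equals the sum of the squares of the three $2\times2$ minors of $E$. Setting $a=\omega_j-\omega_i$ and $b=\omega_k-\omega_i$ one has $\omega_{jk}=a-b$, $\omega_{ki}=b$, $\omega_{ij}=-a$, and each of $\det[\omega_{jk},\omega_{ki}]$, $\det[\omega_{ki},\omega_{ij}]$, $\det[\omega_{ij},\omega_{jk}]$ reduces to $\det[a,b]$, whose modulus is $2T_{ijk}$. Therefore $\det(EE^\top)=3(2T_{ijk})^2=12\,T_{ijk}^2$, i.e. $\sigma_{\min}(E)\,\sigma_{\max}(E)=2\sqrt3\,T_{ijk}$, so that $\sigma_{\min}(E)=2\sqrt3\,T_{ijk}/\|E\|_2$. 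Combining the last two paragraphs,
\[
    \max_{\omega\in\Omega_{ijk}}\|\hat\omega_{ijk}^{\dag}(\omega)\|_2
    =\|\hat\omega_{ijk}^{\dag}(\omega_*)\|_2
    =\frac1{\sigma_{\min}(W)}
    =\frac{\sqrt3}{\sigma_{\min}(E)}
    =\frac{\sqrt3\,\|E\|_2}{2\sqrt3\,T_{ijk}}
    =\frac{\|[\omega_{jk},\omega_{ki},\omega_{ij}]\|_2}{2T_{ijk}},
\]
which is (\ref{maxW}). The only step needing genuine care is the identity $EE^\top=3WW^\top$: one must identify the matrix $M$ linking the edge vectors to the centred vertices, verify $MM^\top=3I-ee^\top$, and then observe that the rank-one correction $ee^\top$ is annihilated precisely because $\omega_*$ is the centroid. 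Everything else is a routine Cauchy--Binet count together with the elementary identity $|\det[\omega_j-\omega_i,\ \omega_k-\omega_i]|=2T_{ijk}$ for the area of a triangle.
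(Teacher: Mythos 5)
Your proof is correct, and while it shares the paper's first step -- reducing via \eqref{bound:hat_omega} to evaluating $\|\hat\omega_{ijk}^{\dag}(\omega_*)\|_2$ at the centroid, which lies in $\Omega_{ijk}$ so the bound is attained -- the evaluation itself proceeds by a genuinely different route. The paper computes the pseudoinverse in closed form: writing $\hat\omega_{ijk}(\omega_*)=\tfrac13[\omega_{ij},\omega_{jk}]\left(\begin{smallmatrix}2&-1&-1\\1&1&-2\end{smallmatrix}\right)$ and using the $2\times2$ identity $C^{-\top}=\tfrac{1}{\det C}\left(\begin{smallmatrix}0&1\\-1&0\end{smallmatrix}\right)C\left(\begin{smallmatrix}0&-1\\1&0\end{smallmatrix}\right)$, it shows that $\big(\hat\omega_{ijk}^{\dag}(\omega_*)\big)^\top$ is exactly $\tfrac{1}{2T_{ijk}}$ times a rotation of $[\omega_{jk},\omega_{ki},\omega_{ij}]$, so the norm identity (indeed, equality of all singular values) drops out. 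You instead work at the level of Gram matrices: the identity $EE^\top=3WW^\top$ (via $E=WM$, $MM^\top=3I-ee^\top$, and $We=0$) ties the singular values of $E$ to those of $W$, and Cauchy--Binet gives $\det(EE^\top)=12T_{ijk}^2$, so $\sigma_{\min}(E)=2\sqrt3\,T_{ijk}/\|E\|_2$ and the result follows. Your argument is more conceptual and avoids the planar rotation trick (it would survive in settings where that trick is unavailable), at the cost of only recovering the operator-norm equality rather than the full matrix identity the paper obtains; the paper's computation is more explicit and reveals that $\hat\omega_{ijk}^{\dag}(\omega_*)$ literally \emph{is} the rotated edge matrix over $2T_{ijk}$. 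Both are complete; all the identities you use ($M^2=-(3I-ee^\top)$, each $2\times2$ minor of $E$ equal to $\pm\det[\omega_j-\omega_i,\omega_k-\omega_i]$, and $\|W^\dag\|_2=1/\sigma_{\min}(W)$ for a full-row-rank $W$) check out.
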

\begin{proof}
Since $\omega_i-\omega_* = \frac{1}{3}(2\omega_i-\omega_j-w_k) = \frac{1}{3}(2\omega_{ij}+\omega_{jk})$, and similarly,
$\omega_j-\omega_* = \frac{1}{3}(\omega_{jk}-\omega_{ij})$ and $\omega_k-\omega_*=-\frac{1}{3}(\omega_{ij}+2\omega_{jk})$,
$\hat\omega_{ijk}(\omega_*)$ has the representation
\begin{align*}
    \hat\omega_{ijk}(\omega_*) 
    = \frac{1}{3}[\omega_{ij},\omega_{jk}]
    \left[\begin{array}{rrr}
         2 & -1 & -1\\
         1 &  1 & -2
    \end{array}\right].
\end{align*}
Hence, $\big(\hat\omega_{ijk}^{\dag}(\omega_*)\big)^\top=(\hat\omega_{ijk}(\omega_*)\hat\omega_{ijk}^\top(\omega_*))^{-1}\hat\omega_{ijk}(\omega_*)$ can be represented as
\begin{align*}
    &\ \big(\hat\omega_{ijk}^{\dag}(\omega_*)\big)^\top 
    = [\omega_{ij},\omega_{jk}]^{-\top}
    \left[\begin{array}{rr}
         2 & 1 \\
         1 & 2
    \end{array}\right]^{-1}
    \left[\begin{array}{rrr}
         2 & -1 & -1\\
         1 &  1 & -2
    \end{array}\right]
    =[\omega_{ij},\omega_{jk}]^{-\top}
    \left[\begin{array}{rrr}
         1 & -1 & 0\\
         0 &  1 & -1
    \end{array}\right]\\
    =&\ \frac{1}{2\det[\omega_{ij},\omega_{jk}]}
    \left[\begin{array}{rr}
         0 & 1 \\
         -1 & 0
    \end{array}\right][\omega_{ij},\omega_{jk}]
    \left[\begin{array}{rrr}
         0 & -1 & 1\\
         1 & -1 & 0
    \end{array}\right]
    =\frac{1}{2T_{ijk}}
    \left[\begin{array}{rr}
         0 & 1 \\
         -1 & 0
    \end{array}\right][\omega_{jk},\omega_{ki},\omega_{ij}].
\end{align*}
Here we have used the equality $C^{-\top} = \frac{1}{\det C}\big({0\atop -1}\ {1\atop 0}\big)C\big({0\atop 1}\ {-1\atop 0}\big)$
for a nonsingular matrix $C$ of order 2, and $\det[\omega_{ij},\omega_{jk}]=2T_{ijk}$. Combining it with (\ref{bound:hat_omega}), we get the bound (\ref{maxW}) immediately.
\end{proof}

By Lemma \ref{lma: hat_omega_{ijk}}, $\|\hat\omega_{ijk}^{\dag}(\omega)\|_2 \leq \frac{\|[\omega_{jk},\omega_{ki},\omega_{ij}]\|_2}{2T_{ijk}}\leq \frac{\sqrt{3}d(\Omega_{ijk})}{2T_{ijk}}$. Substituting the bound into (\ref{nQ})
we get an upper bound of $\|\mathbf{n}_{ijk}^\top Q(\mathbf{x})\|$. 

\begin{lemma}\label{lma:nQ}
If $\nabla\mathbf{x}(\omega)$ is Lipschitz continuous with Lipschitz constant $C_{\cal M}$, then
\begin{align}\label{bound:nQ}
    \|\mathbf{n}_{ijk}^\top Q(\mathbf{x})\|
    \leq \frac{3C_{\cal M}}{\sigma_{\min}(\nabla\mathbf{x})}
        \frac{d^3(\Omega_{ijk})}{T_{ijk}}.
\end{align}
\end{lemma}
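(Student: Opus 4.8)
The plan is to assemble the estimates already prepared above. Recall the identity derived just before the statement: writing the Taylor-type expansion $(\ref{v_ell})$ in the form $v_{ijk}-\mathbf{x}e^\top = \nabla\mathbf{x}\,\hat\omega_{ijk}+g_{ijk}$ and using $\mathbf{n}_{ijk}^\top(v_{ijk}-pe^\top)=0$ together with $(\ref{p})$, one gets
\[
    \mathbf{n}_{ijk}^\top Q(\mathbf{x})
    = -\bigl(\tau(\mathbf{x})e^\top+\mathbf{n}_{ijk}^\top g_{ijk}\bigr)
      \hat\omega_{ijk}^{\dag}(\nabla\mathbf{x}^\top\nabla\mathbf{x})^{-1/2}.
\]
First I would bound the three factors separately. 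We have $\|(\nabla\mathbf{x}^\top\nabla\mathbf{x})^{-1/2}\|_2 = 1/\sigma_{\min}(\nabla\mathbf{x})$; by Lemma $\ref{lma:tau}$, $|\tau(\mathbf{x})|\leq C_{\cal M}d^2(\Omega_{ijk})$; and by $(\ref{g})$, $\|g_\ell\|_2\leq C_{\cal M}d^2(\Omega_{ijk})$ for each $\ell$, so that $\|g_{ijk}\|_2\leq\sqrt{3}\,C_{\cal M}d^2(\Omega_{ijk})$ and hence $\|\tau(\mathbf{x})e^\top+\mathbf{n}_{ijk}^\top g_{ijk}\|_2\leq 2\sqrt{3}\,C_{\cal M}d^2(\Omega_{ijk})$. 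This is precisely the bound $(\ref{nQ})$.

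Next I would control $\|\hat\omega_{ijk}^{\dag}\|_2$ uniformly over $\omega\in\Omega_{ijk}$. By $(\ref{bound:hat_omega})$ — which rests on the eigenvalue-monotonicity Lemma $\ref{lma:eig}$ applied to $B(\omega)=\hat\omega_{ijk}(\omega)\hat\omega_{ijk}^\top(\omega)$ — the maximum of $\|\hat\omega_{ijk}^{\dag}(\omega)\|_2$ is attained at the centroid $\omega_*=\frac13\omega_{ijk}e$, and Lemma $\ref{lma: hat_omega_{ijk}}$ evaluates it as $\|\hat\omega_{ijk}^{\dag}(\omega_*)\|_2 = \|[\omega_{jk},\omega_{ki},\omega_{ij}]\|_2/(2T_{ijk})$. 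Since each column of $[\omega_{jk},\omega_{ki},\omega_{ij}]$ has Euclidean norm at most $d(\Omega_{ijk})$, the spectral norm of this matrix is bounded by its Frobenius norm, which is at most $\sqrt{3}\,d(\Omega_{ijk})$; therefore $\|\hat\omega_{ijk}^{\dag}\|_2\leq \sqrt{3}\,d(\Omega_{ijk})/(2T_{ijk})$.

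Finally I would multiply the two bounds:
\[
    \|\mathbf{n}_{ijk}^\top Q(\mathbf{x})\|_2
    \leq \frac{2\sqrt{3}\,C_{\cal M}}{\sigma_{\min}(\nabla\mathbf{x})}\,d^2(\Omega_{ijk})
      \cdot\frac{\sqrt{3}\,d(\Omega_{ijk})}{2T_{ijk}}
    = \frac{3C_{\cal M}}{\sigma_{\min}(\nabla\mathbf{x})}\,\frac{d^3(\Omega_{ijk})}{T_{ijk}},
\]
which is the claimed bound $(\ref{bound:nQ})$. There is no real obstacle once Lemmas $\ref{lma:tau}$–$\ref{lma: hat_omega_{ijk}}$ are available; the only points needing a moment's care are ensuring the pseudoinverse estimate $(\ref{bound:hat_omega})$ is valid for \emph{every} $\omega$ in the triangle (not merely at the vertices), which is exactly what the uniform minimization in Lemma $\ref{lma:eig}$ provides, and tracking the $\sqrt{3}$ factors so that the constant collapses cleanly to $3$.
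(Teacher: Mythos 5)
Your argument is correct and coincides with the paper's own proof: the same identity for $\mathbf{n}_{ijk}^\top Q(\mathbf{x})$, the same bounds $|\tau|\leq C_{\cal M}d^2(\Omega_{ijk})$ and $\|g_{ijk}\|_2\leq\sqrt{3}C_{\cal M}d^2(\Omega_{ijk})$ giving (\ref{nQ}), and the same use of Lemmas \ref{lma:eig} and \ref{lma: hat_omega_{ijk}} to bound $\|\hat\omega_{ijk}^{\dag}\|_2\leq\sqrt{3}d(\Omega_{ijk})/(2T_{ijk})$ uniformly over the triangle. The constants collapse to $3$ exactly as in the text, so nothing further is needed.
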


\section{Approximation of discrete conformal energy}

The classical discretization of continuous conformal energy is obtained via using the piecewise linear surface $\{V_{ijk}\}$ as an approximation of the surfaces ${\cal M}$, where the surface gradient $\nabla_{\cal M}f$ is approximated as piecewise constants.
In this section, we first give a careful estimation to the discrete error of the surface gradient $\nabla_{\cal M}f$ given $f$ in terms of $\|\mathbf{n}_{ijk}^\top Q(\mathbf{x})\|$. Combining it with the error analysis given in the previous sections, the discrete error of continuous conformal energy follows, 

\subsection{Discrete estimation of surface gradient}

Consider the following functions for $\mathbf{x}\in{\cal M}_{ijk}$
\begin{align}\label{f_ell}
    h_{\ell}(\mathbf{x}) 
    = f(v_\ell) - f(\mathbf{x}) + \nabla\!_{\cal M}f(\mathbf{x})(\mathbf{x}-v_\ell), 
    \quad \ell = i,j,k.
\end{align}
Taking any convex combination of them with weight vector $\alpha=\{\alpha_\ell\}$, we have
\begin{align}\label{f}
    f(\mathbf{x})
    =\big(f_{ijk}-h_{ijk}(\mathbf{x})\big)\alpha 
    +\nabla\!_{\cal M}f(\mathbf{x})(\mathbf{x}-v_{ijk}\alpha),
\end{align}
where $f_{ijk} = [f_i,f_j,f_k]$, $h_{ijk}(\mathbf{x}) = [h_i(\mathbf{x}),h_j(\mathbf{x}),h_k(\mathbf{x})]$, and $v_{ijk} = [v_i,v_j,v_k]$.

\begin{lemma}\label{lma:nabla Mf}
Assume that $f$ is $C^1$-continuous over ${\cal M}$. Then for any $\mathbf{x}\in{\cal M}_{ijk}$, 
\begin{align}\label{nabla Mf}
    \nabla\!_{\cal M}f(\mathbf{x})
    &=\big(f_{ijk}-h_{ijk}(\mathbf{x})\big)b_{ijk}^\top
        +\nabla\!_{\cal M}f(\mathbf{x})\mathbf{n}_{ijk}\mathbf{n}_{ijk}^\top.
\end{align}
\end{lemma}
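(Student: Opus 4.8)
The plan is to prove \cref{lma:nabla Mf} by a purely algebraic manipulation, with no differentiation of $f$ at all (so $C^1$ is more than enough); the only inputs beyond the definition \eqref{f_ell} of $h_\ell$ are the barycentric‑coordinate identities \eqref{def:p}, \eqref{nabla a}, \eqref{nabla p}, \eqref{nabla ax} already established in \cref{sec:barycentric cond}. First I would read off from \eqref{f_ell} that, for each $\ell\in\{i,j,k\}$, $f(v_\ell)-h_\ell(\mathbf{x}) = f(\mathbf{x})+\nabla\!_{\cal M}f(\mathbf{x})(v_\ell-\mathbf{x})$, which in matrix form is
\[
    f_{ijk}-h_{ijk}(\mathbf{x}) = f(\mathbf{x})e^\top + \nabla\!_{\cal M}f(\mathbf{x})\big(v_{ijk}-\mathbf{x}e^\top\big),\qquad \mathbf{x}\in{\cal M}_{ijk}.
\]
Here $\nabla\!_{\cal M}f(\mathbf{x})$ is treated as the fixed $n\times m$ matrix at the given point $\mathbf{x}$; it is never differentiated.

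Next I would multiply this identity on the right by $b_{ijk}^\top$. Differentiating the constant function $a_i+a_j+a_k\equiv 1$ on $\pi(V_{ijk})$ and using \eqref{nabla a} gives $0=\nabla(a_i+a_j+a_k)=e^\top\nabla a(p)=e^\top b_{ijk}^\top$ (equivalently, $s_i+s_j+s_k=(v_{jk}+v_{ki}+v_{ij})\times\mathbf{n}_{ijk}=0$, so $b_i+b_j+b_k=0$). Hence the terms $f(\mathbf{x})e^\top b_{ijk}^\top$ and $\mathbf{x}e^\top b_{ijk}^\top$ both vanish, leaving
\[
    \big(f_{ijk}-h_{ijk}(\mathbf{x})\big)b_{ijk}^\top = \nabla\!_{\cal M}f(\mathbf{x})\,v_{ijk}b_{ijk}^\top .
\]
The one ingredient that carries the geometric content is the identity $v_{ijk}b_{ijk}^\top = I-\mathbf{n}_{ijk}\mathbf{n}_{ijk}^\top$, i.e.\ that $v_{ijk}b_{ijk}^\top$ is exactly the orthogonal projector onto the direction space of the triangle plane $\pi(V_{ijk})$. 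I would obtain it by differentiating the representation $p(\mathbf{x})=v_{ijk}\,a(p(\mathbf{x}))$ of \eqref{def:p} with respect to $\mathbf{x}$: the left-hand side has Jacobian $\nabla p(\mathbf{x})=I-\mathbf{n}_{ijk}\mathbf{n}_{ijk}^\top$ by \eqref{nabla p}, while the right-hand side has Jacobian $v_{ijk}\,\nabla_{\mathbf{x}}\!\big(a(p(\mathbf{x}))\big)=v_{ijk}b_{ijk}^\top$ by \eqref{nabla ax}. Substituting this back yields $\big(f_{ijk}-h_{ijk}(\mathbf{x})\big)b_{ijk}^\top=\nabla\!_{\cal M}f(\mathbf{x})\big(I-\mathbf{n}_{ijk}\mathbf{n}_{ijk}^\top\big)$, and moving the normal term to the other side gives exactly \eqref{nabla Mf}.

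I do not expect a genuine obstacle here; the proof is short once the pieces are lined up. The points that need care are purely bookkeeping: that $b_{ijk}=[b_i,b_j,b_k]$ is $m\times 3$ and $b_i+b_j+b_k=0$; that each $b_\ell\perp\mathbf{n}_{ijk}$, which is why \eqref{nabla ax} legitimately replaces $\nabla a(p)\nabla p(\mathbf{x})$ by $b_{ijk}^\top$; and that the first-order remainders $h_\ell(\mathbf{x})$ are handled as fixed quantities at the chosen $\mathbf{x}$. If one prefers to avoid the chain rule for $p(\mathbf{x})$, the same identity $v_{ijk}b_{ijk}^\top=I-\mathbf{n}_{ijk}\mathbf{n}_{ijk}^\top$ can instead be checked directly from the explicit formulas $b_\ell=s_\ell/(2A_{ijk})$, verifying that $v_{ijk}b_{ijk}^\top$ fixes the spanning edge vectors $v_{ij},v_{jk}$ of $\pi(V_{ijk})$ and annihilates $\mathbf{n}_{ijk}$.
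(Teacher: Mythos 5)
Your proof is correct, and it reaches \eqref{nabla Mf} by a slightly different route than the paper. The paper also starts from the matrix identity \eqref{f}, but instead of right-multiplying by $b_{ijk}^\top$ it evaluates \eqref{f} at two barycentric weight vectors $a(p)$ and $a(p_0)$ with $p_0=p-t(I-\mathbf{n}_{ijk}\mathbf{n}_{ijk}^\top)u$, subtracts, uses the affinity $a(p)-a(p_0)=b_{ijk}^\top(p-p_0)$, and lets $u$ range over all directions to conclude that $\big\{(f_{ijk}-h_{ijk})b_{ijk}^\top-\nabla\!_{\cal M}f\big\}(I-\mathbf{n}_{ijk}\mathbf{n}_{ijk}^\top)=0$; the final step $b_{ijk}^\top\mathbf{n}_{ijk}=0$ is the same as yours. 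Your version replaces that differencing-over-arbitrary-directions argument with the two explicit algebraic facts $b_{ijk}e=0$ and $v_{ijk}b_{ijk}^\top=I-\mathbf{n}_{ijk}\mathbf{n}_{ijk}^\top$, both of which follow from the machinery of Subsection~\ref{sec:barycentric cond} exactly as you describe (and the second of which the paper uses only implicitly). This buys a small amount of robustness: you never need to choose $t$ and $u$ so that $p_0$ lands inside $V_{ijk}$, and the geometric content is isolated in the single projector identity, which can be verified directly on the spanning edges and on $\mathbf{n}_{ijk}$. One bookkeeping caveat you should make explicit: the identity $v_{ijk}b_{ijk}^\top=I-\mathbf{n}_{ijk}\mathbf{n}_{ijk}^\top$ presupposes the ambient dimension is $3$ (so that the orthogonal complement of $\pi(V_{ijk})$ is spanned by the single vector $\mathbf{n}_{ijk}$), but the paper already commits to this by defining $s_\ell$ via cross products, so you are on the same footing.
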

\begin{proof}
Let $u$ be an arbitrary vector and $t\neq 0$ a scale such that 
$p_0 = p-t(I-\mathbf{n}_{ijk}\mathbf{n}_{ijk}^\top)u\in V_{ijk}$.
Choose $\alpha = a(p)$ and $\alpha = a(p_0)$, the vectors of the barycentric coordinates of $p$ and $p_0$, respectively, in (\ref{f}), we have two representations of $f(\mathbf{x})$. Combining the two representations, we get 
\begin{align}\label{p-p_0}
    \big(f_{ijk}-h_{ijk}(\mathbf{x})\big)\big(a(p)-a(p_0)\big)
    -\nabla\!_{\cal M} f(\mathbf{x})(p-p_0)=0.
\end{align}
Since $a(p)$ is a linear function of $p$ by (\ref{nabla a}), we have
$a(p) - a(p_0) = \nabla a(p)(p-p_0) = b_{ijk}^\top(p-p_0)$.
Substituting it into (\ref{p-p_0}), we get
\[
    0 = \Big\{\big(f_{ijk}-h_{ijk}(\mathbf{x})\big)b_{ijk}^\top
        -\nabla\!_{\cal M} f(\mathbf{x}) \Big\}(p-p_0)
    =t\Big\{\big(f_{ijk}-h_{ijk}(\mathbf{x})\big)b_{ijk}^\top
        -\nabla\!_{\cal M} f(\mathbf{x}) \Big\} (I-\mathbf{n}_{ijk}\mathbf{n}_{ijk}^\top)u.
\]
Since $t\neq 0$ and $u$ is arbitrary, we conclude that 
\[
    \Big\{\big(f_{ijk}-h_{ijk}(\mathbf{x})\big)b_{ijk}^\top
        -\nabla\!_{\cal M} f(\mathbf{x})\Big\}
    (I-\mathbf{n}_{ijk}\mathbf{n}_{ijk}^\top) = 0.
\]
By definition, $b_{ijk}^\top\mathbf{n}_{ijk} = 0$. Hence,
\[
    \nabla\!_{\cal M} f(\mathbf{x})(I-\mathbf{n}_{ijk}\mathbf{n}_{ijk}^\top)
    = \big(f_{ijk}-h_{ijk}(\mathbf{x})\big)b_{ijk}^\top
    (I-\mathbf{n}_{ijk}\mathbf{n}_{ijk}^\top)
    =\big(f_{ijk}-h_{ijk}(\mathbf{x})\big)b_{ijk}^\top.
\]
This is equivalent to (\ref{nabla Mf}).
\end{proof}

Based on (\ref{nabla Mf}), the continuous function $\|\nabla_{\!\cal M} f(\mathbf{x})\|_F^2$ can be approximated by the discrete $\big\|f_{ijk}b_{ijk}^\top\big\|_F^2$ for $\mathbf{x}\in{\cal M}_{ijk}$. The following theorem gives an upper bound of the approximation error.

\begin{theorem}\label{thm:error esti}
Suppose $f$ is $C^1$-continuous over $\cal M$, then for $\mathbf{x}\in{\cal M}_{ijk}$,
\begin{align}\label{error}
    \Big|\|\nabla_{\!\cal M} f(\mathbf{x})\|_F^2     
        -\big\|f_{ijk}b_{ijk}^\top\big\|_F^2\Big|
    \leq\psi_f^2(\mathbf{x})+2\|f_{ijk}b_{ijk}^\top\|_F\psi_f(\mathbf{x}),
\end{align}
where 
\begin{align}\label{psi_f}
    \psi_f(\mathbf{x})
    =\|h_{ijk}b_{ijk}^\top\|_F
    +\|\nabla\!_{\cal M}f(\mathbf{x})\|_F\|\mathbf{n}_{ijk}^\top Q\|_2^2.
\end{align}
\end{theorem}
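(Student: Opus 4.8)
The plan is to start from the identity \eqref{nabla Mf} of Lemma~\ref{lma:nabla Mf}, which decomposes $\nabla_{\!\cal M}f(\mathbf{x})$ as the ``discrete'' term $f_{ijk}b_{ijk}^\top$ plus two correction terms: one coming from the Taylor remainders $-h_{ijk}(\mathbf{x})b_{ijk}^\top$, and one coming from the fact that $\mathbf{n}_{ijk}$ is not exactly normal to the tangent space, namely $\nabla_{\!\cal M}f(\mathbf{x})\mathbf{n}_{ijk}\mathbf{n}_{ijk}^\top$. Writing $\nabla_{\!\cal M}f(\mathbf{x}) = f_{ijk}b_{ijk}^\top + R$, where $R = -h_{ijk}(\mathbf{x})b_{ijk}^\top + \nabla_{\!\cal M}f(\mathbf{x})\mathbf{n}_{ijk}\mathbf{n}_{ijk}^\top$, the first step is to bound $\|R\|_F$. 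By the triangle inequality $\|R\|_F \le \|h_{ijk}b_{ijk}^\top\|_F + \|\nabla_{\!\cal M}f(\mathbf{x})\mathbf{n}_{ijk}\mathbf{n}_{ijk}^\top\|_F$. For the second summand, since $\nabla_{\!\cal M}f(\mathbf{x})$ has row space in the tangent space, $\nabla_{\!\cal M}f(\mathbf{x}) = \nabla_{\!\cal M}f(\mathbf{x})\,Q(\mathbf{x})Q(\mathbf{x})^\top$, so $\|\nabla_{\!\cal M}f(\mathbf{x})\mathbf{n}_{ijk}\|_2 = \|\nabla_{\!\cal M}f(\mathbf{x})Q(\mathbf{x})Q(\mathbf{x})^\top\mathbf{n}_{ijk}\|_2 \le \|\nabla_{\!\cal M}f(\mathbf{x})\|_F\,\|Q(\mathbf{x})^\top\mathbf{n}_{ijk}\|_2$, and then $\|\nabla_{\!\cal M}f(\mathbf{x})\mathbf{n}_{ijk}\mathbf{n}_{ijk}^\top\|_F = \|\nabla_{\!\cal M}f(\mathbf{x})\mathbf{n}_{ijk}\|_2$ (a rank-one matrix, $\mathbf{n}_{ijk}$ a unit vector). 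This gives $\|\nabla_{\!\cal M}f(\mathbf{x})\mathbf{n}_{ijk}\mathbf{n}_{ijk}^\top\|_F \le \|\nabla_{\!\cal M}f(\mathbf{x})\|_F\,\|\mathbf{n}_{ijk}^\top Q\|_2$. Here I need to reconcile this with the claimed $\psi_f$, which carries $\|\mathbf{n}_{ijk}^\top Q\|_2^2$ rather than the first power; the resolution is presumably a sharper argument using that $Q(\mathbf{x})^\top\mathbf{n}_{ijk}$ is small together with $\|\mathbf{n}_{ijk}^\top Q\|_2 \le 1$, or an extra orthogonality that kills the first-order term — I would look for the step where $\nabla_{\!\cal M}f(\mathbf{x})\mathbf{n}_{ijk}\mathbf{n}_{ijk}^\top$ gets projected again so that one factor $\|\mathbf{n}_{ijk}^\top Q\|$ is upgraded to its square. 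In any case, the outcome of this first stage is $\|R\|_F \le \psi_f(\mathbf{x})$ with $\psi_f$ as in \eqref{psi_f}.

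The second step is purely algebraic: from $\nabla_{\!\cal M}f(\mathbf{x}) = f_{ijk}b_{ijk}^\top + R$ we get
\[
    \|\nabla_{\!\cal M}f(\mathbf{x})\|_F^2 - \|f_{ijk}b_{ijk}^\top\|_F^2
    = 2\,\langle f_{ijk}b_{ijk}^\top, R\rangle_F + \|R\|_F^2,
\]
so by Cauchy--Schwarz in the Frobenius inner product,
\[
    \Big|\|\nabla_{\!\cal M}f(\mathbf{x})\|_F^2 - \|f_{ijk}b_{ijk}^\top\|_F^2\Big|
    \le 2\,\|f_{ijk}b_{ijk}^\top\|_F\,\|R\|_F + \|R\|_F^2
    \le 2\,\|f_{ijk}b_{ijk}^\top\|_F\,\psi_f(\mathbf{x}) + \psi_f^2(\mathbf{x}),
\]
which is exactly \eqref{error}. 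Note this last inequality uses that $t \mapsto 2\|f_{ijk}b_{ijk}^\top\|_F\,t + t^2$ is increasing in $t \ge 0$, so bounding $\|R\|_F$ by $\psi_f(\mathbf{x})$ suffices.

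The main obstacle is the first step, specifically getting the \emph{square} $\|\mathbf{n}_{ijk}^\top Q\|_2^2$ in the tangent-space error term rather than the first power that the crude bound above produces. I expect the gain comes from noticing that the offending term can be written using the oblique projection onto $\pi(V_{ijk})$: since $(I-\mathbf{n}_{ijk}\mathbf{n}_{ijk}^\top)$ restricted to the tangent space and then composed back differs from the identity by something of order $\|\mathbf{n}_{ijk}^\top Q\|_2^2$ — concretely, $Q^\top(I - \mathbf{n}_{ijk}\mathbf{n}_{ijk}^\top)Q = I - (Q^\top\mathbf{n}_{ijk})(Q^\top\mathbf{n}_{ijk})^\top$ has its deviation from $I$ exactly of size $\|\mathbf{n}_{ijk}^\top Q\|_2^2$. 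Feeding this into the identity \eqref{nabla Mf} — i.e. re-expressing $\nabla_{\!\cal M}f(\mathbf{x})\mathbf{n}_{ijk}\mathbf{n}_{ijk}^\top$ after one more projection through $Q$ — should convert the linear dependence into a quadratic one, at the cost of a harmless term already absorbed into $\|h_{ijk}b_{ijk}^\top\|_F$. Once the bound $\|R\|_F \le \psi_f(\mathbf{x})$ is in hand, the rest is the routine Cauchy--Schwarz expansion above.
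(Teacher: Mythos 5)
Your overall skeleton --- decompose $\nabla_{\!\cal M}f(\mathbf{x})$ as the discrete term plus a remainder, then expand the difference of squared Frobenius norms via Cauchy--Schwarz --- is the same as the paper's, and your second, purely algebraic step is correct as written. But the first step has a genuine gap, which you flag yourself and then do not close: with your remainder $R=-h_{ijk}(\mathbf{x})b_{ijk}^\top+\nabla_{\!\cal M}f(\mathbf{x})\mathbf{n}_{ijk}\mathbf{n}_{ijk}^\top$, the asserted bound $\|R\|_F\le\psi_f(\mathbf{x})$ is in general \emph{false}. Indeed, since $b_{ijk}^\top\mathbf{n}_{ijk}=0$ the two pieces of $R$ are orthogonal in the Frobenius inner product, so $\|R\|_F^2=\|h_{ijk}b_{ijk}^\top\|_F^2+\|\nabla_{\!\cal M}f(\mathbf{x})\mathbf{n}_{ijk}\|_2^2$, and $\|\nabla_{\!\cal M}f(\mathbf{x})\mathbf{n}_{ijk}\|_2=\|\nabla_{\!\cal M}f(\mathbf{x})Q(Q^\top\mathbf{n}_{ijk})\|_2$ is genuinely first order in $\|\mathbf{n}_{ijk}^\top Q\|_2$ (there is no further cancellation unless $Q^\top\mathbf{n}_{ijk}$ happens to lie in the kernel of $\nabla_{\!\cal M}f(\mathbf{x})Q$). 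So ``in any case the outcome is $\|R\|_F\le\psi_f$'' is not a proof, and the theorem with the square in $\psi_f$ does not follow from your decomposition.

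The missing idea --- which your closing speculation circles but never executes --- is to project the identity of Lemma~\ref{lma:nabla Mf} back onto the tangent space \emph{before} estimating. Because the row space of $\nabla_{\!\cal M}f$ lies in the tangent space, $\nabla_{\!\cal M}f(\mathbf{x})=\nabla_{\!\cal M}f(\mathbf{x})QQ^\top$; right-multiplying \eqref{nabla Mf} by $QQ^\top$ gives $\nabla_{\!\cal M}f(\mathbf{x})=f_{ijk}b_{ijk}^\top QQ^\top+\psi_1Q^\top+\psi_2Q^\top$ with $\psi_1=-h_{ijk}b_{ijk}^\top Q$ and $\psi_2=\nabla_{\!\cal M}f(\mathbf{x})QQ^\top\mathbf{n}_{ijk}\mathbf{n}_{ijk}^\top Q$. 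The last expression carries \emph{two} factors of $Q^\top\mathbf{n}_{ijk}$, whence $\|\psi_2\|_F\le\|\nabla_{\!\cal M}f(\mathbf{x})\|_F\|\mathbf{n}_{ijk}^\top Q\|_2^2$ --- this is exactly where the square in $\psi_f$ comes from. One then expands $\|f_{ijk}b_{ijk}^\top Q+\psi_1+\psi_2\|_F^2$, uses $\|f_{ijk}b_{ijk}^\top Q\|_F\le\|f_{ijk}b_{ijk}^\top\|_F$ and $\|\psi_1\|_F\le\|h_{ijk}b_{ijk}^\top\|_F$, and your Cauchy--Schwarz computation goes through verbatim. (Incidentally, in your original expansion the cross term $\langle f_{ijk}b_{ijk}^\top,\nabla_{\!\cal M}f(\mathbf{x})\mathbf{n}_{ijk}\mathbf{n}_{ijk}^\top\rangle$ vanishes because $b_{ijk}^\top\mathbf{n}_{ijk}=0$; exploiting that would rescue your decomposition, but it yields a bound containing $\|\nabla_{\!\cal M}f(\mathbf{x})\|_F^2\|\mathbf{n}_{ijk}^\top Q\|_2^2$ as an additive term --- a correct but different estimate, not the one stated in \eqref{error}.)
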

\begin{proof}
We rewrite (\ref{nabla Mf}) as
$
    \nabla\!_{\cal M}f(\mathbf{x}) 
    = \nabla\!_{\cal M}f(\mathbf{x})QQ^\top
    = f_{ijk}b_{ijk}^\top QQ^\top +\psi_1Q^\top+\psi_2Q^\top,
$
where 
\[
    \psi_1 = -h_{ijk}b_{ijk}^\top Q,\quad 
    \psi_2 = \nabla\!_{\cal M}f(\mathbf{x})\mathbf{n}_{ijk}\mathbf{n}_{ijk}^\top Q
    =\nabla\!_{\cal M}f(\mathbf{x})QQ^\top\mathbf{n}_{ijk}\mathbf{n}_{ijk}^\top Q.
\]
Hence,
$
    \|\nabla\!_{\cal M}f(\mathbf{x})\|_F^2
    = \|f_{ijk}b_{ijk}^\top Q\|_F^2+\|\psi_1+\psi_2\|_F^2
        +2\langle f_{ijk}b_{ijk}^\top Q,\psi_1+\psi_2\rangle
$, and 
\begin{align*}
    \Big|\|\nabla_{\!\cal M} f(\mathbf{x})\|_F^2     
        -\big\|f_{ijk}b_{ijk}^\top\big\|_F^2\Big|
    &\leq \|\psi_1+\psi_2\|_F^2
         +2\|f_{ijk}b_{ijk}^\top\|_F \|\psi_1+\psi_2\|_F\\
    &\leq (\|\psi_1\|_F+\|\psi_2\|_F)^2
         +2\|f_{ijk}b_{ijk}^\top\|_F (\|\psi_1\|_F+\|\psi_2\|_F).
\end{align*}
Hence. (\ref{error}) follows from 
$\|\psi_1\|_F\leq \|h_{ijk}b_{ijk}^\top\|_F$ and
$\|\psi_2\|_F\leq \|\nabla\!_{\cal M}f(\mathbf{x})\|_F
    \|\mathbf{n}_{ijk}^\top Q\|_2^2$.
\end{proof}

The dominant term in the error representation (\ref{error}) has the simple form by (\ref{nabla ax}), 
\begin{align}\label{fb}
    \big\|f_{ijk}b_{ijk}^\top\big\|_F^2=
    \frac {1}{4A_{ijk}^2} \big\|f_{ijk}s_{ijk}^\top\big\|_F^2
    =\frac {1}{2A_{ijk}} \sum_{\ell m\in\{ij,jk,ki\}}
        \|f(v_\ell)-f(v_m)\|^2\cot\beta_{\ell m},
\end{align}
where the last equality is given in \cite{XGST20book}. 

Under some assumptions,  $\|\psi_1\|_F$ and $\|\psi_2\|_F$ can be will further bounded. For instance, by Lemma \ref{lma:nQ}, if $\nabla\mathbf{x}$ is Lipschitz continuous, then 
\[
	\|\psi_2\|_F\leq \|\nabla\!_{\cal M}f(\mathbf{x})\|_F\Big\{
		\frac{3C_{\cal M}d^3(\Omega_{ijk})}{\sigma_{\min}(\nabla\mathbf{x})T_{ijk}} \Big\}^2. 
\]
We also need the Lipschitz condition for $\nabla_{\cal M}f(\mathbf{x})$ 
\begin{align}\label {L-cond:f}
	\big\|\nabla_{\cal M}f(\mathbf{x})-\nabla_{\cal M}f(\hat{\mathbf{x}})\big\|_F
	\leq c_L(f)\|\mathbf{x}-\hat{\mathbf{x}}\|
\end{align}
to bound the factor $\|h_{ijk}\|_F$ in the bound $\|\psi_1\|_F\leq \|h_{ijk}\|_F\|b_{ijk}\|_2$. To this end, we use the first order estimation of $f(\mathbf{x}(\omega))$,
\[
    f(\mathbf{x}) = f(\mathbf{x}(\omega)) = 
    f(v_\ell)+ \nabla\!_{\omega}f(\mathbf{x}(\hat\omega))(\omega-\omega_\ell),
\]
where $\hat\omega\in\Omega_{ijk}$. Since $\nabla\!_{\omega}f(\mathbf{x}(\omega)) 
= \nabla_{\cal M}f(\mathbf{x})\nabla \mathbf{x}(\omega) $, 
\begin{align*}
	h_{\ell}(\mathbf{x}) 
	&= \nabla_{\cal M}f(\mathbf{x})(\mathbf{x}-v_\ell)-\nabla\!_{\omega}f(\mathbf{x}(\hat\omega))(\omega-\omega_\ell)\\
	&= \nabla_{\cal M}f(\mathbf{x})(\mathbf{x}-v_\ell)-\nabla\!_{\omega}f(\mathbf{x}(\omega))(\omega-\omega_\ell)
	+\big(\nabla\!_{\omega}f(\mathbf{x}(\omega)) - \nabla\!_{\omega}f(\mathbf{x}(\hat\omega)\big)(\omega-\omega_\ell)\\
	&= -\nabla_{\cal M}f(\mathbf{x})g_\ell
	+\big(\nabla\!_{\omega}f(\mathbf{x}(\omega)) - \nabla\!_{\omega}f(\mathbf{x}(\hat\omega)\big)(\omega-\omega_\ell).
\end{align*}
Here we have used (\ref{v_ell}) in the last equality. Furthermore, we also have 
\begin{align*}
	\nabla\!_{\omega}f(\mathbf{x}(\omega)) - \nabla\!_{\omega}f(\mathbf{x}(\hat\omega))
	&= \nabla_{\cal M}f(\mathbf{x})\nabla \mathbf{x}(\omega) 
	- \nabla_{\cal M}f(\hat{\mathbf{x}})\nabla \mathbf{x}(\hat\omega)\\
	&=\nabla_{\cal M}f(\mathbf{x})\big(\nabla \mathbf{x}(\omega) -\nabla \mathbf{x}(\hat\omega) \big)
	+\big(\nabla_{\cal M}f(\mathbf{x})-\nabla_{\cal M}f(\hat{\mathbf{x}})\big)\nabla \mathbf{x}(\hat\omega).
\end{align*}
Combining it with Lipschitz conditions (\ref {L-cond:M}) and (\ref{L-cond:f}), together with (\ref{g}), we get 
\begin{align}\label{bound_h}
	\|h_\ell\| 
	&\leq \big\{2C_{\cal M}\| \nabla_{\cal M}f(\mathbf{x})\|_Fd(\Omega_{ijk})
	+c_L(f)\|\mathbf{x}-\hat{\mathbf{x}}\|\|\nabla \mathbf{x}(\hat\omega)\|_F\big\}\|\omega-\omega_\ell\|
	\leq \epsilon_f,
\end{align}
and $\|h_{ijk}\|_F \leq \sqrt{3}\epsilon_f$, where 
\begin{align}\label{epsilon_f}	
	\epsilon_f=  \big\{2C_{\cal M}\| \nabla_{\cal M}f(\mathbf{x})\|_F
	+c_L(f)\max_{\omega}\|\nabla \mathbf{x}(\omega)\|_F^2\big\}d^2(\Omega_{ijk}).
\end{align}
Finally, by the definition of $\{s_\ell\}$ in Subsection \ref{sec:barycentric cond}, we write $s_{ijk} = G[v_{jk},v_{ki},v_{ij}]$ with a rotation matrix $G$. Hence,
$
    \|b_{ijk}\|_2 
    = \frac{1}{2A_{ijk}}\|s_{ijk}\|_2
    = \frac{1}{2A_{ijk}}\|[v_{ij},v_{jk},v_{ki}]\|_2
    \leq \frac{\sqrt{3}}{2A_{ijk}}d(V_{ijk})
$ and 
\begin{align}\label{bound:fHS}
	\|h_{ijk}b_{ijk}^\top\|_F \leq \frac{3d(V_{ijk})}{2A_{ijk}}\epsilon_f.
\end{align}
Therefore, we can further bound $\psi_f$ in the error estimation given in Theorem \ref{thm:error esti},
\begin{lemma}\label{lma:psi_f}
Assume that the Lipschitz conditions (\ref {L-cond:M}) and (\ref {L-cond:f}) are satisfied, then
\begin{align}\label{bound:psi_f}
  	 \psi_f(\mathbf{x})
	 &\leq \frac{3d(V_{ijk})}{2A_{ijk}}\epsilon_f
	 	+\|\nabla\!_{\cal M}f(\mathbf{x})\|_F\Big\{
		\frac{3C_{\cal M}d^3(\Omega_{ijk})}{\sigma_{\min}(\nabla\mathbf{x})T_{ijk}} \Big\}^2.
\end{align}
\end{lemma}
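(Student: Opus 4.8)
The statement is essentially a matter of substituting two bounds that have already been assembled above into the definition of $\psi_f$, so the plan is just to make that substitution carefully. Recall from Theorem~\ref{thm:error esti} that $\psi_f(\mathbf{x})$ is a sum of two nonnegative terms,
\begin{align*}
    \psi_f(\mathbf{x}) = \|h_{ijk}b_{ijk}^\top\|_F + \|\nabla\!_{\cal M}f(\mathbf{x})\|_F\,\|\mathbf{n}_{ijk}^\top Q\|_2^2,
\end{align*}
which I would bound separately and then add.

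For the first term, I would invoke the chain of estimates running from (\ref{f_ell}) through (\ref{bound:fHS}). The first-order Taylor expansion $f(\mathbf{x}) = f(v_\ell) + \nabla\!_\omega f(\mathbf{x}(\hat\omega))(\omega - \omega_\ell)$ at each vertex, together with $\nabla\!_\omega f(\mathbf{x}(\omega)) = \nabla\!_{\cal M}f(\mathbf{x})\nabla\mathbf{x}(\omega)$, yields the identity $h_\ell(\mathbf{x}) = -\nabla\!_{\cal M}f(\mathbf{x})g_\ell + \big(\nabla\!_\omega f(\mathbf{x}(\omega)) - \nabla\!_\omega f(\mathbf{x}(\hat\omega))\big)(\omega - \omega_\ell)$; the two Lipschitz hypotheses (\ref{L-cond:M}) and (\ref{L-cond:f}), combined with the bound (\ref{g}) on $g_\ell$, give $\|h_\ell\| \le \epsilon_f$, hence $\|h_{ijk}\|_F \le \sqrt{3}\,\epsilon_f$; and combining with $\|b_{ijk}\|_2 \le \frac{\sqrt{3}}{2A_{ijk}}d(V_{ijk})$ via $\|h_{ijk}b_{ijk}^\top\|_F \le \|h_{ijk}\|_F\|b_{ijk}\|_2$ gives exactly (\ref{bound:fHS}), namely $\|h_{ijk}b_{ijk}^\top\|_F \le \frac{3d(V_{ijk})}{2A_{ijk}}\epsilon_f$.

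For the second term, I would apply Lemma~\ref{lma:nQ}, which bounds $\|\mathbf{n}_{ijk}^\top Q(\mathbf{x})\|$ by $\frac{3C_{\cal M}}{\sigma_{\min}(\nabla\mathbf{x})}\frac{d^3(\Omega_{ijk})}{T_{ijk}}$; squaring this and multiplying by $\|\nabla\!_{\cal M}f(\mathbf{x})\|_F$ produces the second summand of (\ref{bound:psi_f}). Adding the two bounds gives the claim. There is no real obstacle: all the substantive work was already done in establishing Lemma~\ref{lma:nQ} and the estimate (\ref{bound:fHS}). The only points to watch are that the constant $\tfrac{3}{2A_{ijk}}$ is precisely the product of the two $\sqrt{3}$ factors coming from $\|h_{ijk}\|_F$ and $\|b_{ijk}\|_2$, and that the Lipschitz conditions required for (\ref{bound:fHS}) are exactly the two assumed in the statement, so both ingredients are legitimately available.
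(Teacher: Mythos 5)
Your proposal is correct and follows exactly the paper's own route: the paper establishes (\ref{bound:fHS}) for the term $\|h_{ijk}b_{ijk}^\top\|_F$ via the Lipschitz conditions and the bound $\|b_{ijk}\|_2\leq \frac{\sqrt{3}}{2A_{ijk}}d(V_{ijk})$, applies Lemma~\ref{lma:nQ} to bound $\|\mathbf{n}_{ijk}^\top Q\|_2^2$, and adds the two estimates in the definition (\ref{psi_f}) of $\psi_f$. Nothing is missing.
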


\subsection{Error estimation of conformal energy discretization}

Using the partitioning ${\cal M} = \bigcup {\cal M}_{ijk}$, the continuous Dirichlet energy 
is the sum of all the sub energy restricting the integral over each piece ${\cal M}_{ijk}$ of $\cal M$,
\[
    \mathcal{E}_D(f) = \sum_{{\cal M}_{ijk}} 
    \frac{1}{2}\int_{{\cal M}_{ijk}} \|\nabla_{\cal M}f(\mathbf{x})\|_F^2d\sigma.
\]
The discretization of the continuous Dirichlet energy is given immediately via the  approximation 
\[
    \frac{1}{2}\int_{{\cal M}_{ijk}} \|\nabla_{\cal M}f(\mathbf{x})\|_F^2\text{d}\sigma
    \approx \frac{1}{2}\int_{{\cal M}_{ijk}}\|f_{ijk}b_{ijk}^\top\|_F^2\text{d}\sigma.
\]
That is, the continuous Dirichlet energy is approximated by the discrete energy
\begin{align}\label{E_D}
    {\cal E}_D^h(f) 
    = \sum_{{\cal M}_{ijk}} 
    \frac{1}{2}\int_{{\cal M}_{ijk}}\|f_{ijk}b_{ijk}^\top\|_F^2\text{d}\sigma.
\end{align}
Using $b_{ijk}^\top = \frac{s_{ijk}^\top}{2A_{ijk}}$, the discrete Dirichlet energy can be represented as
\begin{align}\label{discr conformal energy}
    {\cal E}_D^h(f) 
    = \frac {1}{2}\sum_{{\cal M}_{ijk}} 
        \frac{{\cal A}_{ijk}}{2A_{ijk}}\|f_{ijk}s_{ijk}^\top\|_F^2
    = \frac {1}{2}\sum_{ij: e_{ij}\in\mathcal{E}(M)} 
        w_{ij}^* \|f(v_i)-f(v_j)\|^2
    = \frac{1}{2}\langle L\mathbf{f},\mathbf{f}\rangle,
\end{align}
where ${\cal A}_{ijk}$ is the area of ${\cal M}_{ijk}$, $\mathbf{f}$ is the matrix of rows $\{f_i^\top\}$, $L = (\ell_{ij})$ is the Laplacian matrix with $\ell_{ij} = -w_{ij}^*$ if $e_{ij}\in \mathcal{E}(M)$ or $\ell_{ij}=0$ if $e_{ij}\notin \mathcal{E}(M)$ for $i\neq j$, and $\ell_{ii} = \sum_{j\neq i}w_{ij}^*$, \begin{align}\label{def:omega}
    w_{ij}^* = \left\{\begin{array}{ll}
        \frac{\rho_{ij}\cot\beta_{ij}+\rho_{ji}\cot\beta_{ji}}{2}, & [v_i,v_j]\in \mathcal{E}(M\backslash\partial M);\\
        \frac{\rho_{ij}\cot\beta_{ij}}{2}, & [v_i,v_j]\in \mathcal{E}(\partial M),
    \end{array}\right.\quad
    \rho_{ij} = \frac{\mathcal{A}_{ijk}}{A_{ijk}},
\end{align}
and $\beta_{ij}$ is the angle opposite to the edge connecting $v_i$ and $v_j$ in $V_{ijk}$. 

{\bf Remark}. Since the ratio $\rho_{ij}\approx 1$, the discrete Dirichlet energy ${\cal E}_D^h(f)$ is a slight modification of that given in \cite{XGST20book} using $\rho_{ij}=1$. It is not easy to compute the areas ${\cal A}_{ijk}$. However, an estimation of each ${\cal A}_{ijk}$ is helpful to reduce the folding risk if the local curvature varies much.   

By Theorem \ref{thm:error esti} and the estimations by these lemmas given in the previous sections, we can bound the error of discrete Dirichlet energy to the continuous Dirichlet energy. 

\begin{theorem}\label{thm:discrete error}
If $\nabla_{\cal M}f$ is Lipschitz-continuous on $\cal M$, then
\begin{align}\label{epsilon}
    \big|\mathcal{E}_D(f)-\mathcal{E}_D^h(f)\big|
    \leq \varepsilon_D^h(f) 
    = \frac{1}{2}
    \int_{\mathcal{M}}\psi_f^2(\mathbf{x})\text{d}\sigma
    +\sqrt{2\mathcal{E}_D^h(f)
    \int_{\mathcal{M}}\psi_f^2(\mathbf{x})\text{d}\sigma}.
\end{align}
\end{theorem}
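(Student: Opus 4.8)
The plan is to reduce everything to the pointwise estimate of Theorem~\ref{thm:error esti} and then apply the Cauchy--Schwarz inequality twice, once inside each triangle and once over the triangulation. Since $\{{\cal M}_{ijk}\}$ partitions $\cal M$ without overlap, we have $\mathcal{E}_D(f)-\mathcal{E}_D^h(f)=\tfrac12\sum_{{\cal M}_{ijk}}\int_{{\cal M}_{ijk}}\big(\|\nabla_{\!\cal M}f(\mathbf{x})\|_F^2-\|f_{ijk}b_{ijk}^\top\|_F^2\big)\diff\sigma$, so the triangle inequality together with the bound (\ref{error}) gives $\big|\mathcal{E}_D(f)-\mathcal{E}_D^h(f)\big|\le \tfrac12\sum_{{\cal M}_{ijk}}\int_{{\cal M}_{ijk}}\big(\psi_f^2(\mathbf{x})+2\|f_{ijk}b_{ijk}^\top\|_F\,\psi_f(\mathbf{x})\big)\diff\sigma$. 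The key structural remark is that $\|f_{ijk}b_{ijk}^\top\|_F$ is a constant on each piece ${\cal M}_{ijk}$ — it depends only on the vertex data $f(v_i),f(v_j),f(v_k)$ and the triangle $V_{ijk}$, as (\ref{fb}) makes explicit — so it can be pulled out of the inner integral; write $c_{ijk}:=\|f_{ijk}b_{ijk}^\top\|_F$.

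First I would treat the quadratic term: summing $\int_{{\cal M}_{ijk}}\psi_f^2\diff\sigma$ over all pieces reassembles $\int_{\cal M}\psi_f^2\diff\sigma$ by the partition property, yielding the first summand $\tfrac12\int_{\cal M}\psi_f^2\diff\sigma$ of $\varepsilon_D^h(f)$. For the cross term $\sum_{{\cal M}_{ijk}} c_{ijk}\int_{{\cal M}_{ijk}}\psi_f\diff\sigma$ I would use Cauchy--Schwarz inside each piece, $\int_{{\cal M}_{ijk}}\psi_f\diff\sigma\le \sqrt{{\cal A}_{ijk}}\,\big(\int_{{\cal M}_{ijk}}\psi_f^2\diff\sigma\big)^{1/2}$, and then the discrete Cauchy--Schwarz over the index set, $\sum_{{\cal M}_{ijk}} c_{ijk}\sqrt{{\cal A}_{ijk}}\,\big(\int_{{\cal M}_{ijk}}\psi_f^2\diff\sigma\big)^{1/2}\le \big(\sum_{{\cal M}_{ijk}} c_{ijk}^2{\cal A}_{ijk}\big)^{1/2}\big(\sum_{{\cal M}_{ijk}}\int_{{\cal M}_{ijk}}\psi_f^2\diff\sigma\big)^{1/2}$. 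The second factor is again $\big(\int_{\cal M}\psi_f^2\diff\sigma\big)^{1/2}$ by the partition, and the first factor is exactly $\sqrt{2\mathcal{E}_D^h(f)}$, because (\ref{E_D}) reads $\mathcal{E}_D^h(f)=\tfrac12\sum_{{\cal M}_{ijk}}\int_{{\cal M}_{ijk}}\|f_{ijk}b_{ijk}^\top\|_F^2\diff\sigma=\tfrac12\sum_{{\cal M}_{ijk}}c_{ijk}^2{\cal A}_{ijk}$. Multiplying the cross term by the prefactor $2$ then produces precisely $\sqrt{2\mathcal{E}_D^h(f)\int_{\cal M}\psi_f^2\diff\sigma}$, the second summand of $\varepsilon_D^h(f)$; adding the two contributions finishes the proof.

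The only genuine ingredient is recognizing the telescoping $\sum_{{\cal M}_{ijk}}c_{ijk}^2{\cal A}_{ijk}=2\mathcal{E}_D^h(f)$ and arranging the two applications of Cauchy--Schwarz so that the $\int_{\cal M}\psi_f^2$ factor is shared between them; everything else is bookkeeping on a finite disjoint partition, and no estimate beyond Theorem~\ref{thm:error esti} is invoked in the chain. I would also note in passing that the Lipschitz hypothesis on $\nabla_{\!\cal M}f$ is what guarantees $\psi_f\in L^2({\cal M})$ — through the pointwise bound of Lemma~\ref{lma:psi_f}, combined with Lemma~\ref{lma:nQ} — so that $\varepsilon_D^h(f)$ is finite and the estimate is nonvacuous; it plays no other role in the argument.
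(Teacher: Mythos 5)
Your proposal is correct and follows essentially the same route as the paper: apply Theorem~\ref{thm:error esti} pointwise, integrate over the disjoint pieces ${\cal M}_{ijk}$, and control the cross term by Cauchy--Schwarz using $\sum_{{\cal M}_{ijk}}\|f_{ijk}b_{ijk}^\top\|_F^2{\cal A}_{ijk}=2\mathcal{E}_D^h(f)$ from (\ref{E_D}). The only cosmetic difference is that you apply Cauchy--Schwarz in two stages (within each piece, then over the index set) where the paper applies it once to the piecewise-constant function on all of $\cal M$; both yield the identical bound (\ref{epsilon}).
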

\begin{proof}
By Theorem \ref{thm:error esti},
\begin{align*}
    \big|\mathcal{E}_D(f)-{\cal E}_D^h(f)\big|
    &\leq\frac{1}{2}\sum
    \int_{{\cal M}_{ijk}}\Big|\|\nabla_{\!\cal M} f(\mathbf{x})\|_F^2
    -\big\|f_{ijk}b_{ijk}^\top\big\|_F^2\Big|\text{d}\sigma\\
    &\leq \frac{1}{2}\sum\int_{{\cal M}_{ijk}}\Big(\psi_f^2(\mathbf{x})
    +2\|f_{ijk}b_{ijk}^\top\|_F\psi_f(\mathbf{x})\Big)\text{d}\sigma\\
    &\leq \frac{1}{2}\int_{\mathcal{M}}\psi_f^2(\mathbf{x})\text{d}\sigma
    +\sqrt{\sum\int_{{\cal M}_{ijk}}
    \|f_{ijk}b_{ijk}^\top\|_F^2\text{d}\sigma
    \int_{\mathcal{M}}\psi_f^2(\mathbf{x})\text{d}\sigma}.
\end{align*}
Using the equality (\ref{E_D}), we get (\ref{epsilon}) immediately. 
\end{proof}

The integration $\int_{\mathcal{M}}\psi_f^2(\mathbf{x})\text{d}\sigma$ can be estimated based under some conditions, as shown in (\ref{bound:int psi}).

\section{Convergence of discrete conformal transformation}

Now we are ready to prove the convergence of discrete conformal energy. 
Besides the Lipschitz conditions mentioned before, the convergence conditions also depend on the triangulations on the surface $\cal M$ and the parameter domain $\Omega$. However, these two triangulations are equivalent to each other. Practically, 
\begin{align}\label{equiv:AT}
    \frac{ d(V_{ijk})}{\sigma_{\max}(\nabla \mathbf{x})}
    \leq d(\Omega_{ijk})\leq\frac{ d(V_{ijk})}{\sigma_{\min}(\nabla \mathbf{x})},\quad
    \frac{A_{ijk}}{T_{ijk}}\to \sqrt{\det(\nabla\mathbf{x}^\top\nabla\mathbf{x})}
\end{align}
as $d(V_{ijk})$ or $d(\Omega_{ijk})$ tends to zero.

Let $\sigma_{\max} = \max_{\omega\in\Omega} \|\nabla\mathbf{x}(\omega)\|_F$ and $\sigma_{\min} = \min_{\omega\in\Omega} \sigma_{\min}(\nabla\mathbf{x}(\omega))$, and consider the set
\[
    {\cal F} = \big\{f: \ f({\cal M})  
    = {\cal N}, c_L(f)\leq C_L\big\}.
\]
with a positive constant $C_L$.
By (\ref{bound:psi_f}), if $f$ belongs to the set $\cal F$, then for $\mathbf{x}\in{\cal M}_{ijk}$,
\[
	\epsilon_f
	=  2C_{\cal M}\| \nabla_{\cal M}f(\mathbf{x})\|_Fd^2(\Omega_{ijk})
	+C_L\sigma_{\max}^2d^2(\Omega_{ijk}) .
\]
Substituting it into (\ref{bound:psi_f}), we have that for $\mathbf{x}\in{\cal M}_{ijk}$,
\begin{align}\label{bound:psi2}
	\psi_f(\mathbf{x})
	&\leq  \| \nabla_{\cal M}f(\mathbf{x})\|_F \psi_{ijk}'  +   \psi_{ijk}'',
\end{align}
where  
\begin{align}\label{psi_{ijk}}
	\psi_{ijk}' = \frac{3C_{\cal M}d(V_{ijk})d^2(\Omega_{ijk})}{A_{ijk}}
	+\Big(\frac{3C_{\cal M}d^3(\Omega_{ijk})}{\sigma_{\min}T_{ijk}}\Big)^2,\quad
	\psi_{ijk}''  =  \frac{3C_L\sigma_{\max}^2d(V_{ijk})d^2(\Omega_{ijk})}{2A_{ijk}}.
\end{align}
Let $\psi'_{\max} =  \max \psi_{ijk}' $ and $\psi''_{\max} =  \max \psi_{ijk}''$, we see that
\begin{align}\label{bound:int psi}
	\frac{1}{2}\int_{\cal M}\psi_f^2(\mathbf{x})\text{d}\sigma
	&\leq {\cal E}_D(f)(\psi'_{\max})^2
	+\sqrt{2{\cal A}{\cal E}_D(f)}\psi'_{\max}\psi''_{\max}
	+\frac{1}{2}{\cal A}(\psi''_{\max})^2\nonumber\\
	&= \Big(\sqrt{{\cal E}_D(f)}\psi'_{\max}
	+\sqrt{\frac{1}{2}{\cal A}}\psi''_{\max}\Big)^2,
\end{align}
where $\cal A$ is the area of $\cal M$. Notice that both $\psi'_{\max}$ and $\psi''_{\max}$ also depend on the mesh size $h_\Omega$ or $h_V$,
\[
    h_{\Omega} = \max_{\Omega_{ijk}}d(\Omega_{ij}),\quad
    h_V = \max_{V_{ijk}}d(V_{ij}).
\]
For simplicity, by $h\to 0$, we mean $h_{\Omega}\to 0$ or $h_V\to 0$, equivalently. To highlight the dependency on the mesh, we will use the notation $\psi_{ijk}^{'h}$, $\psi_{ijk}^{''h}$ and  $\psi_f^h$ for the same $\psi_{ijk}'$, $\psi_{ijk}''$ and $\psi_f$, respectively.

\begin{theorem}\label{thm:convergence}
Suppose that $\{\psi_{ijk}^{'h}\}$ and $\{\psi_{ijk}^{''h}\}$ tend to zero uniformly as $h\to0$. Then
\begin{align}
    &\inf_{f\in {\cal F}} \mathcal{E}_D(f)
    = \lim_{h\to 0}\min_{f\in {\cal F}} \mathcal{E}_D^h(f).\label{equivalence}
\end{align}
\end{theorem}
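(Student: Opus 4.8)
The plan is to prove the equality in \eqref{equivalence} by establishing the two inequalities separately, using Theorem \ref{thm:discrete error} together with the uniform bound \eqref{bound:int psi} as the quantitative engine. First I would record the consequence of the hypothesis: since $\{\psi_{ijk}^{'h}\}$ and $\{\psi_{ijk}^{''h}\}$ tend to zero uniformly as $h\to 0$, we have $\psi_{\max}^{'h}\to 0$ and $\psi_{\max}^{''h}\to 0$, so by \eqref{bound:int psi}, for any $f\in{\cal F}$,
\begin{align*}
    \tfrac12\int_{\cal M}(\psi_f^h(\mathbf{x}))^2\diff\sigma
    \leq \big(\sqrt{{\cal E}_D(f)}\,\psi_{\max}^{'h}+\sqrt{\tfrac12{\cal A}}\,\psi_{\max}^{''h}\big)^2
    =: \delta^2(f,h),
\end{align*}
and crucially this bound is \emph{uniform over $f\in{\cal F}$ on any sublevel set} $\{f\in{\cal F}:{\cal E}_D(f)\leq R\}$, since $\delta(f,h)\leq \sqrt{R}\,\psi_{\max}^{'h}+\sqrt{\tfrac12{\cal A}}\,\psi_{\max}^{''h}\to 0$. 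Feeding this into Theorem \ref{thm:discrete error} gives $|{\cal E}_D(f)-{\cal E}_D^h(f)|\leq \varepsilon_D^h(f)\leq \delta^2(f,h)+\sqrt{2{\cal E}_D^h(f)}\,\delta(f,h)$; combined with ${\cal E}_D^h(f)\leq {\cal E}_D(f)+\varepsilon_D^h(f)$ this yields a Grönwall-type estimate showing $|{\cal E}_D(f)-{\cal E}_D^h(f)|\leq \rho(R,h)$ with $\rho(R,h)\to 0$ as $h\to 0$, for all $f$ with ${\cal E}_D(f)\leq R$ (and similarly when ${\cal E}_D^h(f)\leq R$).

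For the direction $\lim_{h\to0}\min_{f\in{\cal F}}{\cal E}_D^h(f)\leq \inf_{f\in{\cal F}}{\cal E}_D(f)$, I would fix $\epsilon>0$, pick $f_\epsilon\in{\cal F}$ with ${\cal E}_D(f_\epsilon)\leq \inf_{\cal F}{\cal E}_D+\epsilon$, set $R={\cal E}_D(f_\epsilon)$, and apply the uniform estimate: for $h$ small, $\min_{f\in{\cal F}}{\cal E}_D^h(f)\leq {\cal E}_D^h(f_\epsilon)\leq {\cal E}_D(f_\epsilon)+\rho(R,h)\leq \inf_{\cal F}{\cal E}_D+\epsilon+\rho(R,h)$; letting $h\to0$ and then $\epsilon\to0$ gives the claim. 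For the reverse direction $\inf_{f\in{\cal F}}{\cal E}_D(f)\leq \liminf_{h\to0}\min_{f\in{\cal F}}{\cal E}_D^h(f)$, I would take, for each $h$, a near-minimizer $f_h\in{\cal F}$ of ${\cal E}_D^h$. The key point is to show the family $\{{\cal E}_D^h(f_h)\}$ is bounded as $h\to0$: since $\inf_{\cal F}{\cal E}_D^h\leq {\cal E}_D^h(f_0)$ for any fixed reference $f_0\in{\cal F}$, and ${\cal E}_D^h(f_0)\to{\cal E}_D(f_0)$ by the uniform estimate applied at the single function $f_0$, we get $\limsup_h {\cal E}_D^h(f_h)\leq{\cal E}_D(f_0)+1=:R$ for $h$ small. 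Then with ${\cal E}_D^h(f_h)\leq R$, the uniform estimate (in the $R$-sublevel set of ${\cal E}_D^h$) gives ${\cal E}_D(f_h)\leq {\cal E}_D^h(f_h)+\rho(R,h)$, hence $\inf_{\cal F}{\cal E}_D\leq{\cal E}_D(f_h)\leq {\cal E}_D^h(f_h)+\rho(R,h)\leq \min_{f\in{\cal F}}{\cal E}_D^h(f)+o(1)$; taking $\liminf$ finishes it.

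I expect the main obstacle to be making the estimate \emph{genuinely uniform} over the relevant family of functions, rather than just pointwise in $f$ — specifically, ensuring that the sublevel-set radius $R$ can be chosen independently of $h$ along the sequence of near-minimizers. The subtlety is circular-looking: the bound on $|{\cal E}_D(f)-{\cal E}_D^h(f)|$ requires control of ${\cal E}_D(f)$ (or ${\cal E}_D^h(f)$), but the near-minimizers $f_h$ vary with $h$. The resolution I sketched above is to first bound $\sup_h {\cal E}_D^h(f_h)$ by comparing to a \emph{fixed} reference function $f_0\in{\cal F}$ (using that ${\cal F}$ is nonempty, which holds because a conformal map ${\cal M}\to{\cal N}$ is assumed to exist and lies in ${\cal F}$ for $C_L$ large — or one simply defines ${\cal F}$ so that it is nonempty), then feed that uniform radius $R$ back into Theorem \ref{thm:discrete error}. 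A secondary technical point is verifying that $\varepsilon_D^h(f)\leq \delta^2(f,h)+\sqrt{2{\cal E}_D^h(f)}\delta(f,h)$ can be solved for a clean two-sided bound $|{\cal E}_D(f)-{\cal E}_D^h(f)|\leq \rho(R,h)$ with an \emph{explicit} $\rho$ depending only on $R$ and the $\psi$-quantities; this is just the observation that $|a-b|\leq c + \sqrt{b}\sqrt{c'}$ with $c,c'\to0$ and $\min(a,b)\leq R$ forces $\max(a,b)$ bounded and then $|a-b|\to0$, a routine algebraic manipulation I would not spell out in full.
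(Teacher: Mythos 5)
Your proposal is correct, and the first half (showing $\lim_{h\to 0}\min_{\cal F}{\cal E}_D^h\le\inf_{\cal F}{\cal E}_D$ by testing ${\cal E}_D^h$ at a fixed near-minimizer of ${\cal E}_D$ and letting the error vanish) is essentially identical to the paper's. Where you genuinely diverge is the reverse inequality. The paper never extracts near-minimizers of ${\cal E}_D^h$: it exploits the multiplicative structure of \eqref{bound:int psi} — the $f$-dependent part of $\sqrt{\tfrac12\int\psi_f^2}$ is $\sqrt{{\cal E}_D(f)}\,\psi'_{\max}$ — to absorb the error into the left-hand side, obtaining $\sqrt{{\cal E}_D(f)}\,(1-\psi'_{\max})\le\sqrt{{\cal E}_D^h(f)}+\sqrt{\tfrac12{\cal A}}\,\psi''_{\max}$ \emph{for every} $f\in{\cal F}$ simultaneously, after which taking infima and letting $h\to0$ finishes in one line with no sublevel-set radius at all. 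Your route instead bootstraps: bound $\sup_h{\cal E}_D^h(f_h)$ by comparison with a fixed reference $f_0$, then feed that radius $R$ back into the error estimate. This works, but note one point you should make explicit rather than wave at: the bound $\delta(f,h)$ is controlled by ${\cal E}_D(f)$, the \emph{continuous} energy, whereas your near-minimizers come with a bound on ${\cal E}_D^h(f_h)$; you must first run the inequality $\sqrt{{\cal E}_D(f_h)}(1-\psi'_{\max})\le\sqrt{R}+\sqrt{\tfrac12{\cal A}}\,\psi''_{\max}$ to convert one into the other before $\delta(f_h,h)\to0$ follows. Once you write that step out, your "routine algebraic manipulation" is exactly the paper's absorption trick applied pointwise, so the two arguments are ultimately powered by the same inequality; the paper's version is shorter and dispenses with the near-minimizer extraction and the nonemptiness-of-${\cal F}$ bookkeeping, while yours is the more generic $\Gamma$-convergence-style template and would survive if the error bound depended on $f$ in a less convenient (non-multiplicative) way.
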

\begin{proof}
On one hand, by Theorem \ref{thm:discrete error}, 
\[
	\mathcal{E}_D^h(f)
    	\leq \mathcal{E}_D(f) +\frac{1}{2}\int_{\cal M}\psi_f^2(\mathbf{x})\text{d}\sigma
        +2\sqrt{\frac{1}{2}\int_{\cal M}\psi_f^2(\mathbf{x})\text{d}\sigma\mathcal{E}_D^h(f)}.
\]
That is, $\Big(\sqrt{\mathcal{E}_D^h(f)}-\sqrt{\frac{1}{2}\int_{\cal M}\psi_f^2(\mathbf{x})\text{d}\sigma}\Big)^2
\leq \mathcal{E}_D(f)+\int_{\cal M}\psi_f^2(\mathbf{x})\text{d}\sigma$. Hence, for a fixed $f\in \cal F$,
\begin{align}\label{bound:{E}_D^h}
    \inf_{f\in {\cal F}} \mathcal{E}_D^h(f)
    \leq\mathcal{E}_D^h(f)
    \leq \Big(\sqrt{\frac{1}{2}\int_{\cal M}\psi_f^2(\mathbf{x})\text{d}\sigma}
        + \sqrt{\mathcal{E}_D(f)+\int_{\cal M}\psi_f^2(\mathbf{x})\text{d}\sigma}\Big)^2.
\end{align}
By the uniform descending of $\{\psi_{ijk}^{'h}\}$ and $\{\psi_{ijk}^{''h}\}$, 
$\lim_{h\to 0}\inf_{f\in {\cal F}} \mathcal{E}_D^h(f)
    \leq \mathcal{E}_D(f)$. Hence,
\begin{align}\label{discr:upper}
    \lim_{h\to 0}\inf_{f\in {\cal F}} \mathcal{E}_D^h(f)
    \leq \inf_{f\in {\cal F}}\mathcal{E}_D(f).
\end{align}

On the other hand, by Theorem \ref{thm:discrete error}, we also have that for a fixed $f\in {\cal F}$,
\begin{align*}
	\mathcal{E}_D(f) 
	\leq \mathcal{E}_D^h(f) 
	+\frac{1}{2}\!\int_{\cal M}\!\psi_f^2(\mathbf{x})\text{d}\sigma
        +2\sqrt{\frac{1}{2}\!\int_{\cal M}\! \psi_f^2(\mathbf{x})\text{d}\sigma\mathcal{E}_D^h(f)}
    = \left\{\!\sqrt{\mathcal{E}_D^h(f)}
        +\sqrt{\frac{1}{2}\!\int_{\cal M}\! 
        \psi_f^2(\mathbf{x})\text{d}\sigma}\right\}^2\!.
\end{align*}
Hence, combining it with (\ref{bound:int psi}), we get
\begin{align*}
	\sqrt{\mathcal{E}_D^h(f)}
	\geq \sqrt{\mathcal{E}_D(f)}-\sqrt{\frac{1}{2}\int_{\cal M}\psi_f^2(\mathbf{x})\text{d}\sigma}
	\geq \sqrt{\mathcal{E}_D(f)}(1-\psi'_{\max})-\sqrt{\frac{1}{2}{\cal A}}\psi''_{\max}.
\end{align*}
That is,  $\sqrt{\mathcal{E}_D(f)}(1-\psi'_{\max})
	\leq \sqrt{\mathcal{E}_D^h(f)}+\sqrt{\frac{1}{2}{\cal A}}\psi''_{\max}$. It follows that
\[
	\inf_{f\in{\cal F}}\mathcal{E}_D(f)(1-\psi'_{\max})^2
	\leq \mathcal{E}_D(f)(1-\psi'_{\max})^2
	\leq \big(\sqrt{\mathcal{E}_D^h(f)}+\sqrt{\frac{1}{2}{\cal A}}\psi''_{\max}\big)^2.
\]
Letting $h \to 0$, we get $\inf_{f\in{\cal F}}\mathcal{E}_D(f)\leq \lim_{h\to 0}\inf_{f\in{\cal F}}\mathcal{E}_D^h(f)$. Combining it with (\ref{discr:upper}), the theorem is proven. 
\end{proof}

The uniformity assumption in Theorem \ref{thm:convergence} holds under a weak condition. 
To divide the condition, let $\theta(\Omega_{ijk})$ and $\theta(V_{ijk})$ be the smallest angle of $\Omega_{ijk}$ and $V_{ijk}$, respectively. Since 
\begin{align}\label{equiv:d}
	\frac{1}{4}d^2(\Omega_{ijk}) \leq \frac{T_{ijk}}{\sin\theta(\Omega_{ijk})}\leq \frac{1}{2}d^2(\Omega_{ijk}),\quad
	\frac{1}{4}d^2(V_{ijk}) \leq \frac{A_{ijk}}{\sin\theta(V_{ijk})}\leq \frac{1}{2}d^2(V_{ijk}),
\end{align}
and by the equivalence (\ref{equiv:AT}), both the two terms $\psi_{ijk}'$  and $\psi_{ijk}''$ in (\ref{psi_{ijk}}) can be further bounded as
\begin{align}\label{psi_{ijk}'}
	\psi_{ijk}' \leq \frac{12C_{\cal M}}{\sigma_{\min}^2}\frac{d(V_{ijk})}{\sin\theta(V_{ijk})}
	+\Big(\frac{12C_{\cal M}}{\sigma_{\min}}\frac{d(\Omega_{ijk})}{\sin\theta(\Omega_{ijk})}\Big)^2,\quad
	\psi_{ijk}''  \leq \frac{6C_L\sigma_{\max}^2}{\sigma_{\min}^2}\frac{d(V_{ijk})}{\sin\theta(V_{ijk})}. 
\end{align}
The ratios $\frac{d(V_{ijk})}{\sin\theta(V_{ijk})}$ and $\frac{d(\Omega_{ijk})}{\sin\theta(\Omega_{ijk})}$
are also equivalent when they tend to zero by (\ref{equiv:AT}) and (\ref{equiv:d}).
Therefore,  by the  equivalence and (\ref{psi_{ijk}'}), we obtain the following theorem.

\begin{theorem}\label{thm:conv_cond}
Suppose that ${\cal M} = \big\{\mathbf{x}(\omega):\ \omega\in\Omega\big\}$, $\nabla\mathbf{x}(\omega)$ is Lipschitz-continuous and not rank-deficient on $\Omega\subset{\cal R}^2$. Then the equivalence (\ref{equivalence}) holds if the triangle partition $\{V_{ijk}\}$ of $\cal M$ satisfy
\begin{align}\label{conv:cond}
    \max_{ijk}\frac{d(V_{ijk})}{\sin\theta(V_{ijk})}\to 0\quad
    \textrm{as}\quad h_V\to 0.
\end{align}
\end{theorem}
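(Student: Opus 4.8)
The plan is to obtain the statement as a corollary of Theorem~\ref{thm:convergence}: it suffices to show that condition~(\ref{conv:cond}) forces the families $\{\psi_{ijk}^{'h}\}$ and $\{\psi_{ijk}^{''h}\}$ defined in~(\ref{psi_{ijk}}) to converge to zero uniformly as $h\to0$. First I would record that, under the hypotheses, all the constants entering~(\ref{psi_{ijk}}) are harmless: $C_{\cal M}<\infty$ is the Lipschitz constant of $\nabla\mathbf{x}$, non-rank-deficiency (with $\Omega$ compact, the setting implicit in the definitions of $\sigma_{\min},\sigma_{\max}$) gives $0<\sigma_{\min}\le\sigma_{\max}<\infty$, and $c_L(f)\le C_L$ holds by definition of $\cal F$. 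Moreover the two-sided bound in~(\ref{equiv:AT}) shows that $d(V_{ijk})$ and $d(\Omega_{ijk})$ are comparable up to the fixed factors $\sigma_{\min},\sigma_{\max}$, so $h_V\to0$ is equivalent to $h_\Omega\to0$ and it is immaterial which mesh size we use.

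Next, combining the elementary identities~(\ref{equiv:d}) — which relate $d^2(V_{ijk})$ with $A_{ijk}/\sin\theta(V_{ijk})$ and $d^2(\Omega_{ijk})$ with $T_{ijk}/\sin\theta(\Omega_{ijk})$ — with the length and area equivalences of~(\ref{equiv:AT}), one rewrites the two quantities in~(\ref{psi_{ijk}}) entirely in terms of the shape ratios $r_V:=d(V_{ijk})/\sin\theta(V_{ijk})$ and $r_\Omega:=d(\Omega_{ijk})/\sin\theta(\Omega_{ijk})$, arriving precisely at the bounds~(\ref{psi_{ijk}'}) with coefficients depending only on $C_{\cal M},C_L,\sigma_{\min},\sigma_{\max}$. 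I would then check that $r_V$ and $r_\Omega$ are themselves comparable for small $h$: the numerators are controlled by~(\ref{equiv:AT}), while for the denominators one uses $\sin\theta=2\cdot(\text{triangle area})/(\text{product of the two adjacent edges})$ together with $A_{ijk}/T_{ijk}\to\sqrt{\det(\nabla\mathbf{x}^\top\nabla\mathbf{x})}\in[\sigma_{\min}^2,\sigma_{\max}^2]$ to conclude that $\sin\theta(V_{ijk})$ and $\sin\theta(\Omega_{ijk})$ differ by bounded factors once $h$ is small enough. Hence $\max_{ijk}r_V\to0$, which is exactly~(\ref{conv:cond}), implies $\max_{ijk}r_\Omega\to0$, and then~(\ref{psi_{ijk}'}) gives $\max_{ijk}\psi_{ijk}^{'h}\to0$ and $\max_{ijk}\psi_{ijk}^{''h}\to0$; the hypothesis of Theorem~\ref{thm:convergence} is met, and~(\ref{equivalence}) follows.

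I expect the genuine difficulty — most of it already absorbed into the estimates~(\ref{psi_{ijk}'}) established before the statement — to be making the ``equivalence of the two triangulations'' quantitative and \emph{uniform over all triangles simultaneously}: that $A_{ijk}/T_{ijk}$ stays bounded away from $0$ and $\infty$, and that the smallest angles $\theta(V_{ijk})$ and $\theta(\Omega_{ijk})$ remain comparable, uniformly in $ijk$ as $h\to0$. This is exactly where the hypotheses do their work: the Lipschitz bound on $\nabla\mathbf{x}$ supplies the uniform second-order control used in~(\ref{v_ell})--(\ref{g}) and Lemma~\ref{lma:tau}, so that each curved patch ${\cal M}_{ijk}$ is a uniformly small perturbation of the flat triangle $V_{ijk}$; non-rank-deficiency keeps $\sigma_{\min}>0$, so the parametrization neither collapses areas nor blows them up; and compactness of $\Omega$ promotes these pointwise statements to uniform ones. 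With that uniformity in hand, nothing beyond the chain of inequalities above is needed.
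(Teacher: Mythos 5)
Your proposal is correct and follows essentially the same route as the paper: the theorem is obtained as a corollary of Theorem~\ref{thm:convergence} by using (\ref{equiv:d}) and (\ref{equiv:AT}) to bound $\psi_{ijk}'$ and $\psi_{ijk}''$ by the shape ratios $d(V_{ijk})/\sin\theta(V_{ijk})$ and $d(\Omega_{ijk})/\sin\theta(\Omega_{ijk})$ as in (\ref{psi_{ijk}'}), and then invoking the asymptotic equivalence of these two ratios so that (\ref{conv:cond}) forces uniform decay of $\psi_{ijk}^{'h}$ and $\psi_{ijk}^{''h}$. Your added justification that the two ratios are comparable (via the law-of-sines/area identity and $A_{ijk}/T_{ijk}\to\sqrt{\det(\nabla\mathbf{x}^\top\nabla\mathbf{x})}$) only makes explicit a step the paper asserts without detail.
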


The condition (\ref{conv:cond}) asks for a not very poor triangulation, and is satisfied generally, for instance, if the smallest angles $\theta(V_{ijk})$ have a positive lower bound. The commonly used Delaunay triangulation of a given set of vertices can increase the possibility of satisfying the condition since Delaunay triangulation maximizes the minimum angle in the triangulation. 
In \cite{Hutc91}, the quasi-uniform condition 
\begin{align}\label{quasi-uniform}
    \frac{d(V_{ijk})}{r(V_{ijk})} \leq C,
\end{align}
is used to guarantee the convergence of discrete conformal mapping between two 2D sets, where $r(V_{ijk})$ is radius of inscribed circle of $V_{ijk}$. By $r(V_{ijk})=\frac{2A_{ijk}}{\|v_{ij}\|+\|v_{jk}\|+\|v_{ki}\|} \leq \frac{1}{2}d(V_{ijk})\sin\theta(V_{ijk})$, 
\[
    \frac{d(V_{ijk})}{\sin\theta(V_{ijk})}
    =\frac{d(V_{ijk})}{r(V_{ijk})}
    \frac{r(V_{ijk})}{\sin\theta(V_{ijk})}
    \leq \frac{C}{2}d(V_{ijk}),
\]
the condition (\ref{conv:cond}) is satisfied obviously under the quasi-uniform condition (\ref{quasi-uniform}).

Finally, we show a spacial proposition of Delaunay triangulation when $\min\theta(V_{ijk}^{(t)})\to 0$. 
It is known that Delaunay triangulation cannot avoid the degradation of minimal angle tending to zero as $h\to 0$. The following lemma shows that when the degradation happens, there are a sequence of the triangles with two approximately equal edges and the smallest edge is a higher order of them.  

\begin{lemma}\label{lma:edge}
Given a sequence of Delaunay triangulation $\{V_{ijk}^{(t)}\}$, if
$\min\theta(V_{ijk}^{(t)})\to 0$ as $\max d(V_{ijk}^{(t)})\to 0$, then there is a subsequence $\{V_{ijk}^{(t')}\}$ with $\|v_{ij}^{(t')}\|_2\leq \|v_{jk}^{(t')}\|_2\leq \|v_{ki}^{(t')}\|_2$ such that 
\begin{align}\label{conv:edge}
    \frac{\|v_{ij}^{(t')}\|_2}{ \|v_{ki}^{(t')}\|_2}\to 0,\quad \frac{\|v_{jk}^{(t')}\|_2}{\|v_{ki}^{(t')}\|_2}\to 1. 
\end{align}
\end{lemma}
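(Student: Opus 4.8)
\textbf{Proof proposal for Lemma \ref{lma:edge}.}

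The plan is to analyze what geometric configuration forces the smallest angle of a Delaunay triangle to degenerate. Suppose a subsequence of triangles $\{V_{ijk}^{(t')}\}$ has $\theta(V_{ijk}^{(t')}) = \theta_{t'} \to 0$ with diameters tending to zero. Label the edges so that $\|v_{ij}^{(t')}\|_2 \le \|v_{jk}^{(t')}\|_2 \le \|v_{ki}^{(t')}\|_2$; then $v_{ki}^{(t')}$ is the longest edge and the smallest angle $\theta_{t'}$ is the one opposite the shortest edge $v_{ij}^{(t')}$, i.e.\ the angle at vertex $v_k$. First I would apply the law of sines: $\frac{\|v_{ij}^{(t')}\|_2}{\sin\theta_{t'}} = \frac{\|v_{ki}^{(t')}\|_2}{\sin\gamma_{t'}}$ where $\gamma_{t'}$ is the (largest) angle opposite $v_{ki}^{(t')}$. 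Since $\sin\gamma_{t'} \le 1$, we get $\|v_{ij}^{(t')}\|_2 / \|v_{ki}^{(t')}\|_2 \le \sin\theta_{t'} \to 0$, which already gives the first limit in (\ref{conv:edge}) with no use of the Delaunay property.

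The second limit, $\|v_{jk}^{(t')}\|_2 / \|v_{ki}^{(t')}\|_2 \to 1$, is where the Delaunay condition must enter. The point is that a thin Delaunay triangle cannot be a ``needle'' that is thin because it has one tiny angle and one angle close to $\pi$ while being otherwise short — the empty-circumcircle property constrains this. Concretely, I would argue by passing to a further subsequence along which the three angles converge, say to $(0, \beta_\infty, \gamma_\infty)$ with $\beta_\infty + \gamma_\infty = \pi$ and $\beta_\infty \le \gamma_\infty$. The ratio $\|v_{jk}^{(t')}\|_2/\|v_{ki}^{(t')}\|_2 \to \sin\beta_\infty/\sin\gamma_\infty = \sin\beta_\infty/\sin\beta_\infty \cdot (\text{ratio})$; since $\gamma_\infty = \pi - \beta_\infty$ we have $\sin\gamma_\infty = \sin\beta_\infty$, so this ratio tends to $1$ \emph{automatically from} $\gamma_\infty = \pi - \beta_\infty$, which holds simply because the third angle vanishes. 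So in fact the second limit also follows from elementary trigonometry once the smallest angle goes to zero, independent of Delaunay.

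Given that, the role I expect the Delaunay hypothesis to actually play is more subtle: it is needed to guarantee that such a degenerating subsequence \emph{can} be extracted in a controlled way (e.g.\ that the degeneration is not an isolated artifact), or the lemma as used downstream may only need the trigonometric content. I would therefore structure the proof as: (i) extract a subsequence on which $\theta(V_{ijk}^{(t)})$ realizes the $\limsup$-type degeneration and all three normalized angles converge; (ii) relabel edges by length, identify the shortest edge as the one opposite the vanishing angle; (iii) apply the law of sines to get $\|v_{ij}^{(t')}\|_2/\|v_{ki}^{(t')}\|_2 \le \sin\theta_{t'} \to 0$; (iv) use $\theta_{t'} \to 0 \Rightarrow$ the other two angles sum to $\to \pi$, and since the middle edge $v_{jk}$ is opposite the middle angle and the long edge $v_{ki}$ opposite the large angle, $\sin(\text{mid angle})/\sin(\text{large angle}) \to 1$. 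The main obstacle is pinning down exactly which property of the Delaunay triangulation is invoked — I suspect it is only used to ensure that there is no uniform lower bound obstruction, and that the quantitative content of (\ref{conv:edge}) is pure triangle geometry; I would want to double-check whether a non-Delaunay counterexample exists (a family of thin triangles with, say, two comparably-small edges and one large edge, violating the second limit), and if so, articulate precisely how Delaunay's empty-circle property excludes it.
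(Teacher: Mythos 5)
There is a genuine gap in your argument for the second limit. Your step (iv) asserts that $\sin(\text{mid angle})/\sin(\text{large angle}) \to 1$ ``automatically'' once the smallest angle vanishes, because the other two angles sum to $\pi$ in the limit. This fails precisely when the middle angle $\theta_b$ also tends to zero: then both $\sin\theta_b$ and $\sin\theta_c=\sin(\theta_a+\theta_b)$ tend to zero and their ratio is indeterminate. A concrete counterexample is the isosceles needle with angles $(\epsilon,\epsilon,\pi-2\epsilon)$, for which $\|v_{jk}\|_2/\|v_{ki}\|_2=\sin\epsilon/\sin 2\epsilon\to 1/2\neq 1$; such triangles can occur in Delaunay triangulations, so the empty-circumcircle property does not simply ``exclude'' this configuration as you conjectured at the end. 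Your argument is complete only under the extra assumption $\theta_b\geq\theta_0>0$, which is exactly Case (1) of the paper's proof; the first limit, as you correctly note, needs no Delaunay hypothesis at all.

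The paper handles the remaining case $\theta_b\to 0$ (hence $\theta_c\to\pi$) by a different idea: rather than showing the degenerate triangle itself satisfies (\ref{conv:edge}), it passes to the adjacent triangle $V_{j'ik}$ sharing the longest edge $v_{ki}$. The Delaunay property enters through the angle condition $\theta_c+\theta_{c'}<\pi$ for the two angles opposite a shared interior edge, which forces $\theta_{c'}\to 0$ and $\sin\theta_{c'}\leq\sin\theta_c$. The law of sines then makes the shared edge $c$ asymptotically the shortest edge of the neighbor ($c/a'\to 0$ and $c/b'\to 0$), and the law of cosines gives $a'^2/b'^2\to 1$, so it is the neighboring triangle that supplies the subsequence satisfying (\ref{conv:edge}). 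This is the missing idea in your proposal: the conclusion is realized on a possibly different triangle of the triangulation, and the Delaunay angle bound is what propagates the degeneration to that neighbor in the required form. To repair your proof you would need to add this neighbor-passing step (or an equivalent use of the Delaunay condition) for the subcase in which the two smallest angles vanish together.
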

\begin{proof}
Let $V_{ijk}^{(t)} = \arg\min_{ijk}\theta(V_{ijk}^{(t)})$ for each $t$. For  simplicity, we also use $V_{ijk}$ as $V_{ijk}^{(t)}$, and let $a = \|v_{ij}\|_2$, $b =\|v_{jk}\|_2$, $c = \|v_{ki}\|_2$, and assume $a\leq b\leq c$ with the opposite angles $0<\theta_a\leq\theta_b\leq\theta_c$, respectively. 
Furthermore, let $V_{i'j'k'} = V_{j'ik}$ with $v_{i'}=v_k$ and $v_{k'} = v_i$, sharing the same edge with $V_{ijk}$, and  
\[
    a' = \|v_{i'j'}\|_2 = \|v_{kj'}\|_2, \quad
    b' = \|v_{j'k'}\|_2 = \|v_{j'i}\|_2, \quad
    c' = \|v_{k'i'}\|_2 = \|v_{ki}\|_2 = c.
\]
Consider the two cases $\theta_b\geq \theta_0>0$ and $\theta_b\to 0$.

(1) If $\theta_b\geq \theta_0>0$ with a constant $\theta_0<\pi/2$, then 
$\theta_c = \pi-\theta_a-\theta_b\leq\pi-\theta_a-\theta_0$. Then
\begin{align*}
    &1\geq\frac{b}{c} = \frac{\sin\theta_b}{\sin\theta_c}
    = \frac{\sin\theta_b}{\sin(\theta_a+\theta_b)}
    \geq \frac{\sin\theta_0}{\sin(\theta_a+\theta_0)}\to 1,\\
    &0<\frac{a}{c} = \frac{\sin\theta_a}{\sin(\theta_a+\theta_b)}
    \leq \frac{\sin\theta_a}{\sin(\theta_a+\theta_0)}\to 0.
\end{align*}

(2) If $\theta_b\to 0$, which implies that $\theta_c\to \pi$, we must have $\theta_{c'}\to 0$ and $\sin\theta_{c'}\leq \sin(\pi-\theta_c) =  \sin\theta_c$ since $\theta_c+\theta_{c'}<\pi$ as Delaunay triangles. Thus, if both of $\theta_{a'}$ and $\theta_{b'}$ do not tend to zero or $\pi$, by the law of sines, we have
\[
    \frac{c}{a'}
    =\frac{\sin\theta_{c'}}{\sin\theta_{a'}} \to 0,\quad
    \frac{c}{b'}
    =\frac{\sin\theta_{c'}}{\sin\theta_{b'}} \to 0.
\]
By the law of cosines, 
\[
    \frac{a'^2}{b'^2} = \frac{b'^2+c^2-2b'c\cos\theta_a}{b'^2} \to 1.
\]
Therefore, we always have a subsequence of the triangles satisfying (\ref{conv:edge}).
\end{proof}

This proposition (\ref{conv:edge}) helps us to further improve the Delaunay triangulation if necessary. We may modify the vertices $\{v_i\}$ to avoid (\ref{conv:edge}) if it happens. When it is done, a Delaunay triangulation  has a positive lower bound for the angles and guarantees $\max_{ijk}\frac{d(V_{ijk})}{\sin\theta(V_{ijk})}\to 0$.


\begin{figure}[t]
    \centering
    \begin{tabular}{ccc}
        \includegraphics[height = 3.5cm]{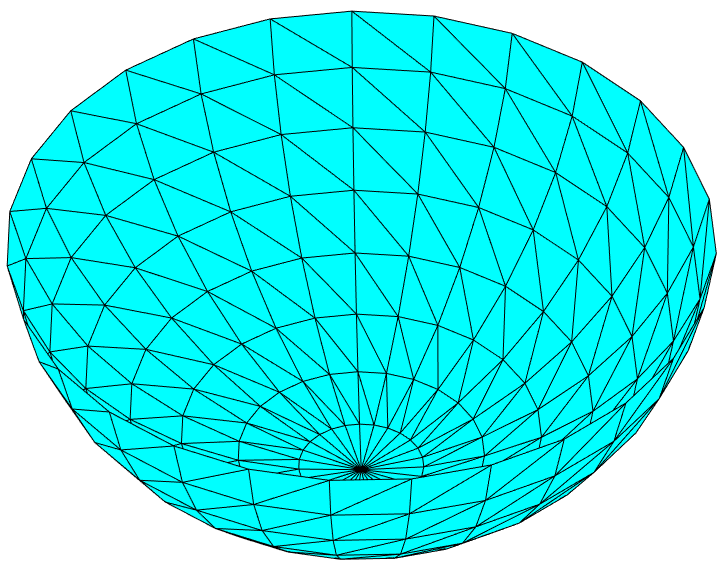}\hspace{10pt} & 
        \includegraphics[height = 3.5cm]{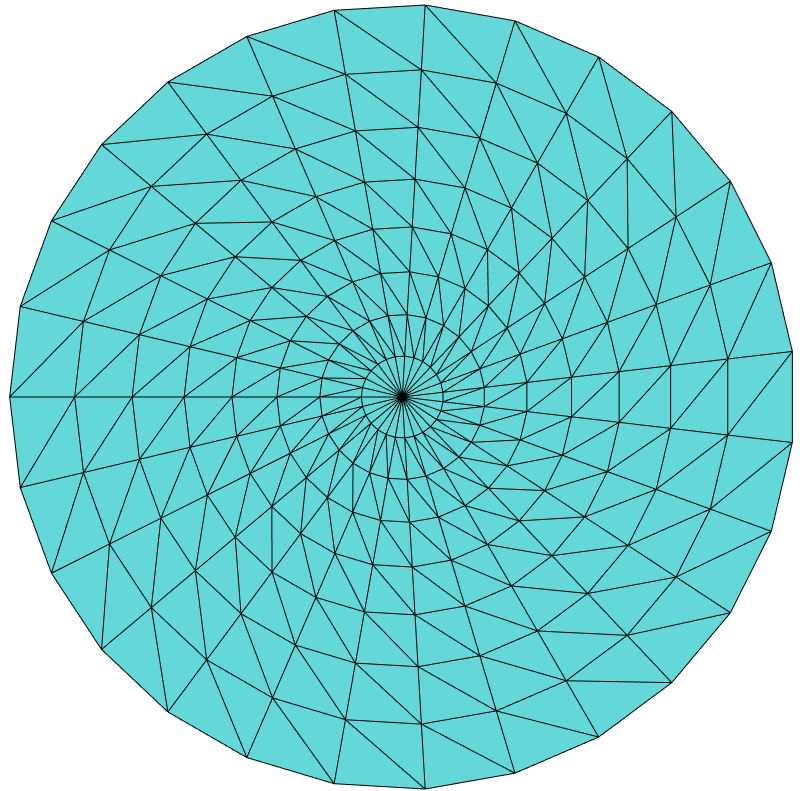} &
        \includegraphics[height = 3.5cm]{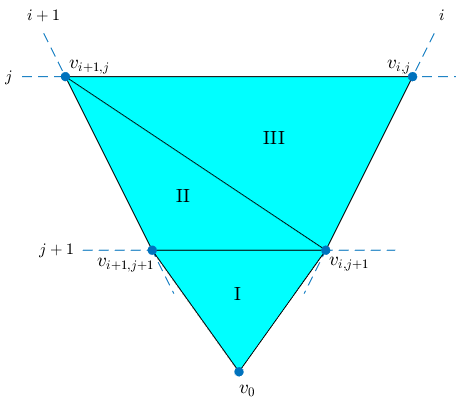}\\
        (a) & (b) & (c)
    \end{tabular}
    \caption{The triangulation of hemisphere (a) and its stereographic projection on to the disk (b) with $m = 27$, $n = 8$. There are tree types of triangles in the triangulation (c).}
    \label{fig:disc_hmsdisk}
\end{figure}

\section{Numerical Experiment}

The conditions of convergence given in the analysis is sufficient theoretically, especially the condition (\ref{conv:cond}) on the triangulation. In this section, we show that this condition is very tight numerically. We consider the stereographic projection that transforms the unit south hemisphere into a unit disk conformally.  
We use the sphere coordinate to parameterize the hemisphere,
\[
    \mathbf{x} = 
         (\cos\phi\sin\psi,\ \sin\phi\sin\psi,\ \cos\psi)^\top,\quad
         \phi\in[0,2\pi],\quad
         \psi\in[\frac{\pi}{2},\pi],
\]
and take $m$ points in the interval $[0,2\pi]$ and $n$ points in $[\frac{\pi}{2},\pi]$ in equal distance as
\begin{align*}
    &\phi_i = \frac{2i\pi}{m},\quad
    \psi_j = \frac{j\pi}{2n} + \frac{\pi}{2}, \quad 
    i = 0,1,\cdots,m-1, \ j = 0,1,\cdots,n-1.
\end{align*}
Hence, we have totally $mn+1$ vertices
$\{v_{ij} = (\cos\phi_i\sin\psi_j,\ \sin\phi_i\sin\psi_j,\ \cos\psi_j)\}$ and the south pole $v_0=(0,0,-1)^\top$ on the surface.
The edges are chosen as follows to generate a triangulation. 
\begin{itemize}
    \item $[v_{ij}, v_{i,j+1}]$, $[v_{ij}, v_{i,j+1}]$ and $[v_{i+1,j}, v_{i,j+1}]$ with $v_{m,j} = v_{0,j}$, for $i\leq m-1$ and $j\leq n-2$;
    \item $[v_{i,n-1}, v_0]$ for $i = 0,1,2,\cdots,m-1$.
\end{itemize}
Figure \ref{fig:disc_hmsdisk} illustrates the triangulation and its stereographic projection onto the unit disk.

Clearly, there are three types of triangles in the triangulation as shown in (c) of Figure \ref{fig:disc_hmsdisk}:
\begin{itemize}
    \item[] Type I. The triangles with the south pole $V_i = [v_0,v_{i,n-1},v_{i+1,n-1}]$ with the edge lengths
    \[
        \|v_0-v_{i,n-1}\|=\|v_0-v_{i+1,n-1}\|=2\sin\frac{\pi}{4n},\quad
        \|v_{i+1,n-1}-v_{i,n-1}\| = 2\sin\frac{\pi}{m}\cos\frac{(n-1)\pi}{2n}.
    \]
    \item[] Type II. The triangles $V_{ij}' = [v_{i+1,j},v_{i+1,j+1},v_{i,j+1}]$ with the largest edge length
    \[
         \|v_{i+1,j}-v_{i,j+1}\|
         = \sqrt{4\sin^2\frac{\pi}{4n} + 4\cos\frac{(j+1)\pi}{2n}\cos\frac{j\pi}{2n}\sin^2\frac{\pi}{m}}.
    \]
    \item[] Type III. The triangles $V_{ij}'' = [v_{i+1,j},v_{i,j+1},v_{i,j}]$ for $i\leq m-1$ and $j\leq n-2$ with the edge lengths
    \[
        \|v_{i+1,j}-v_{i,j+1}\| > \|v_{i,j}-v_{i,j+1}\| = 2\sin\frac{\pi}{4n}, \quad
        \|v_{i+1,j}-v_{i,j}\| = 2\sin\frac{\pi}{m}\cos\frac{j\pi}{2n}.
    \]
\end{itemize}

\begin{figure}[t]
    \centering
    \begin{tabular}{cc}
         \includegraphics[width = 7cm]{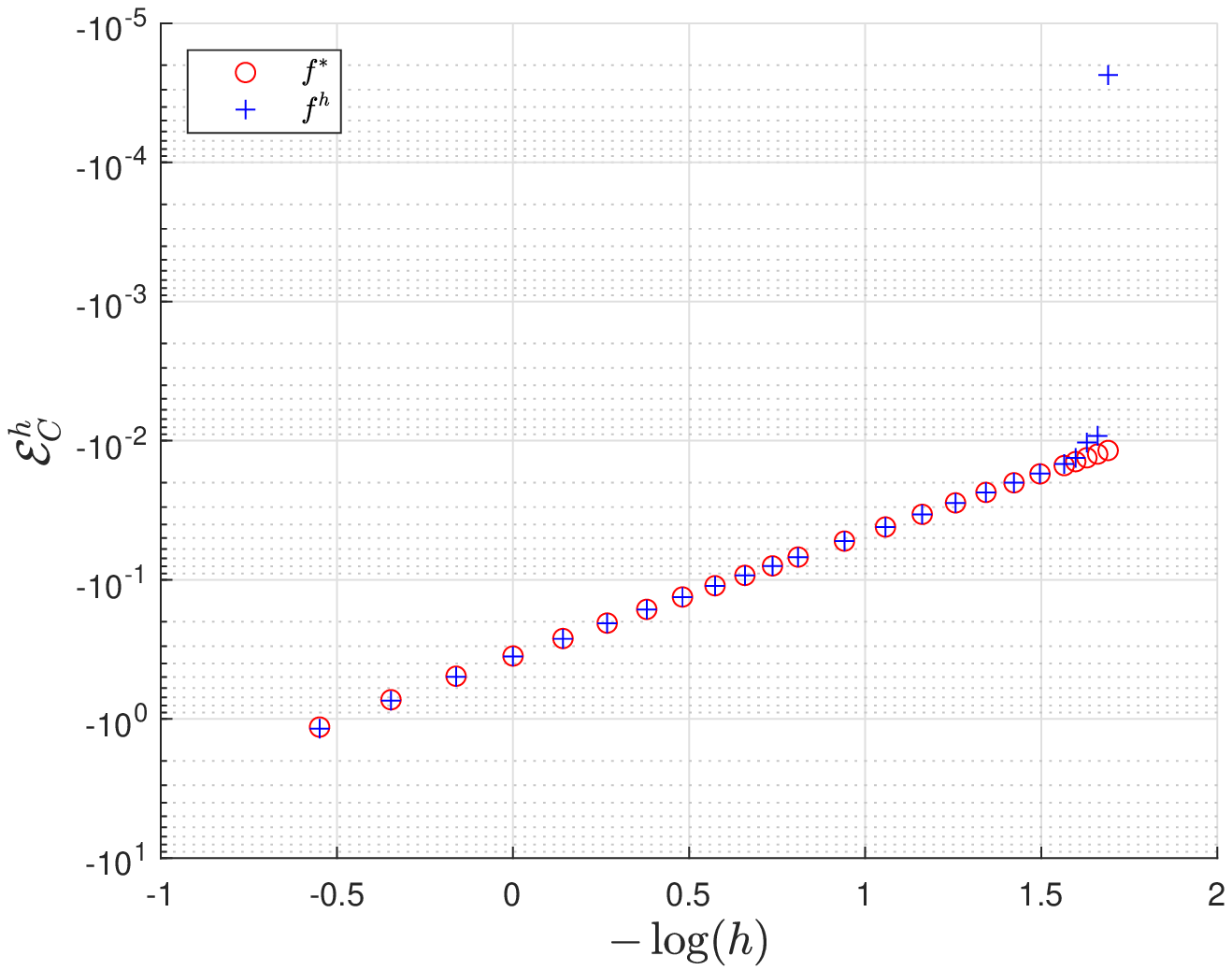} &
         \includegraphics[width = 7cm]{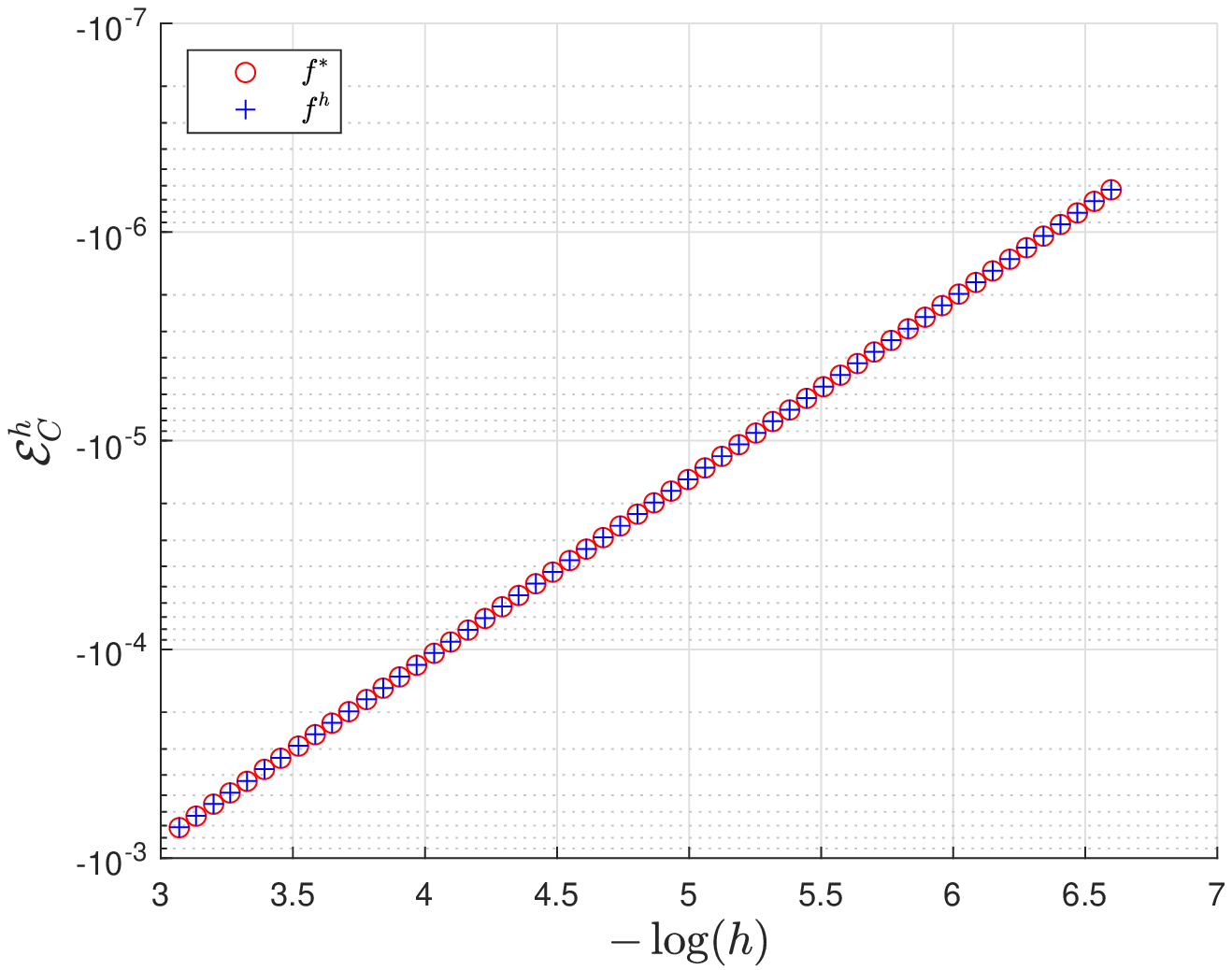}
    \end{tabular}
    \caption{Discrete conformal energy at $r = \frac{1}{4}$ (left) and $r = \frac{11}{12}$ (right).}
    \label{fig:comformal energy}
\end{figure}

The condition (\ref{conv:cond}) depends on a complicated relation between $m$ and $n$ when they tend to infinity. In this experiment, we choose $m=\lfloor n^r \rfloor$ with $r=1/4$ or $r=11/12$, in which the condition (\ref{conv:cond}) can be easily checked as $n\to \infty$ since in the two cases, we always have 
\[
    \lim_{n\to\infty}\frac{d(V_i)}{\sin\theta(V_i)}
        \leq \lim_{n\to\infty}\frac{d(V_{ij}'')}{\sin\theta(V_{ij}'')}
        =\lim_{n\to\infty}\frac{d(V_{ij}')}{\sin\theta(V_{ij}')}.
\]
However, the limit of $\frac{d(V_{ij}')}{\sin\theta(V_{ij}')}$ is zero when $r=11/12$, and it is not zero when $r=1/4$. Hence, the discrete solution $f^h$ that minimizing the discrete conformal energy can converge to the ideal conformal mapping $f^*$ if $r=11/12$ theoretically, but $f^h$ may fail to converge to $f^*$ if $r=1/4$. Numerical computation also supports the conclusion.

We minimize the discrete Dirichlet energy ${\cal E}_D(f)$ using the algorithm CEM given in \cite{MHWW17} on both the triangulation settings, and check the behaviour of the minimal discrete conformal energy ${\cal E}_C(f^h)$ of the solution $f^h$ and the discrete conformal energy ${\cal E}_C(f^*)$ of the ideal conformal mapping $f^*$. We also check the relative error of $f^h$ with respect to $f^*$, defined as the total errors at all the vertices $V$
\[
    \varepsilon(f^h) = \frac{\|f^h(V)-f^*(V)\|_F}{\|f^*(V)\|_F}.
\]

Both the discrete conformal energies ${\cal E}_C(f^h)$ and ${\cal E}_C(f^*)$ are negative and tend to zero as the mesh size $h\to 0$, or $n\to \infty$ equivalently. For $r = 1/4$, they are matched to each other if $h>0.2091$. However, if $h$ decreases, the discrete solution $f^h$ may be over fitting -- ${\cal E}_C(f^h)$ is much smaller than ${\cal E}_C(f^*)$ in magnitude. However, for $r = 11/12$, ${\cal E}_C(f^h)$ matches ${\cal E}_C(f^*)$ very well. See Figure \ref{fig:comformal energy} for the dependence of the discrete conformal energy on the mesh size $h$ for both of $f^h$ and $f^*$. This phenomenon supports the importance of the condition (\ref{conv:cond}) for the convergence of discrete solution.

It is a bit interesting that the approximation error $\varepsilon(f^h)$ of $f^h$ depends on $h^c$ with a constant $c\approx 1.0890$ linearly. Figure \ref{fig:conv_r} shows the phenomenon. The linear dependency is obeyed for $r=1/4$ when $h$ is not sufficiently small. When $h$ decreases and is smaller than $0.2322$, the relative error ascends quickly. However, for $r=11/12$, the relative error descends continuously and the linear dependency is still preserved as $h$ tends to zero. We believe that the excellent performance dues to the convergence condition (\ref{conv:cond}).

\begin{figure}[t]
    \centering
    \begin{tabular}{cc}
         \includegraphics[width = 7cm]{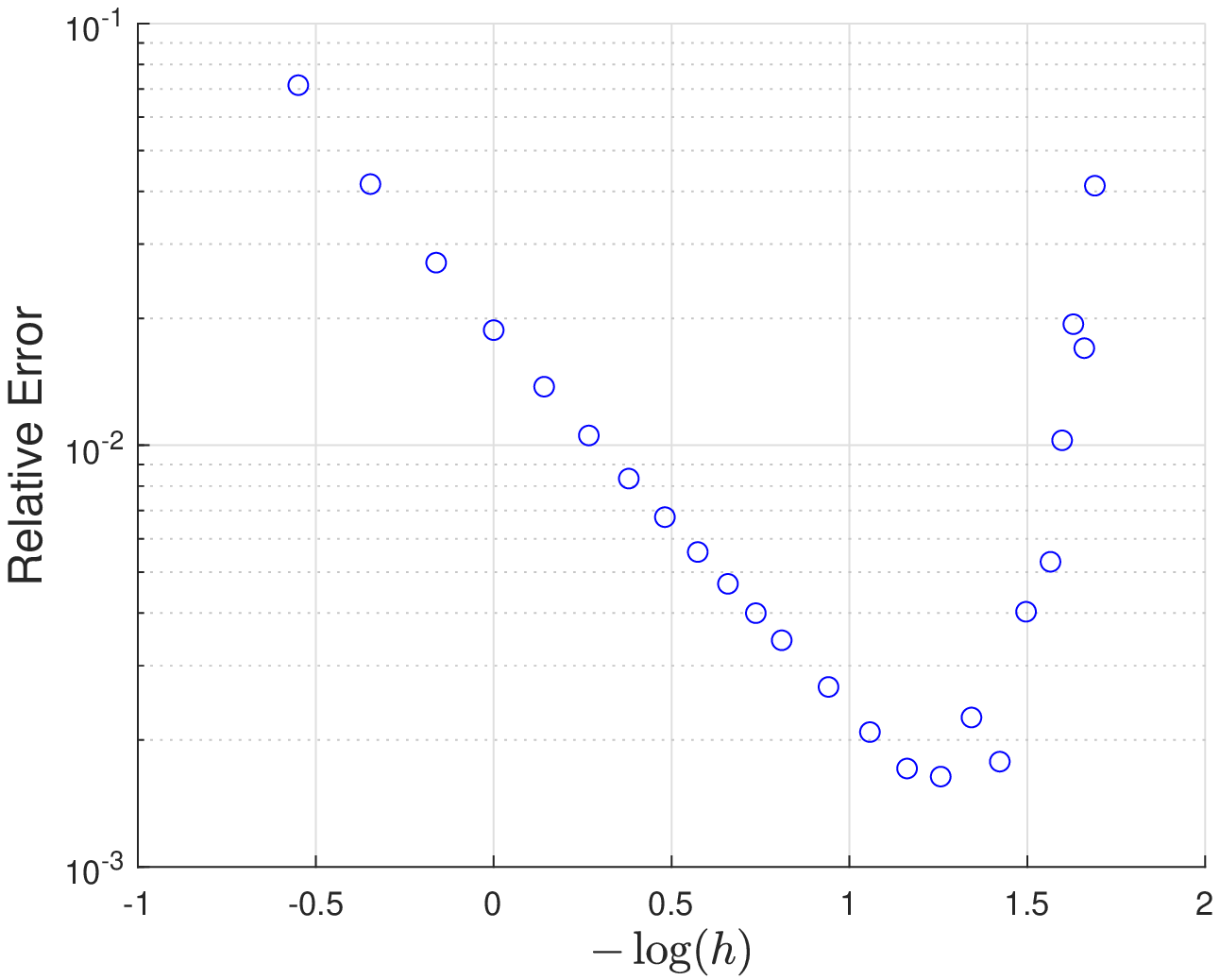} &
         \includegraphics[width = 7cm]{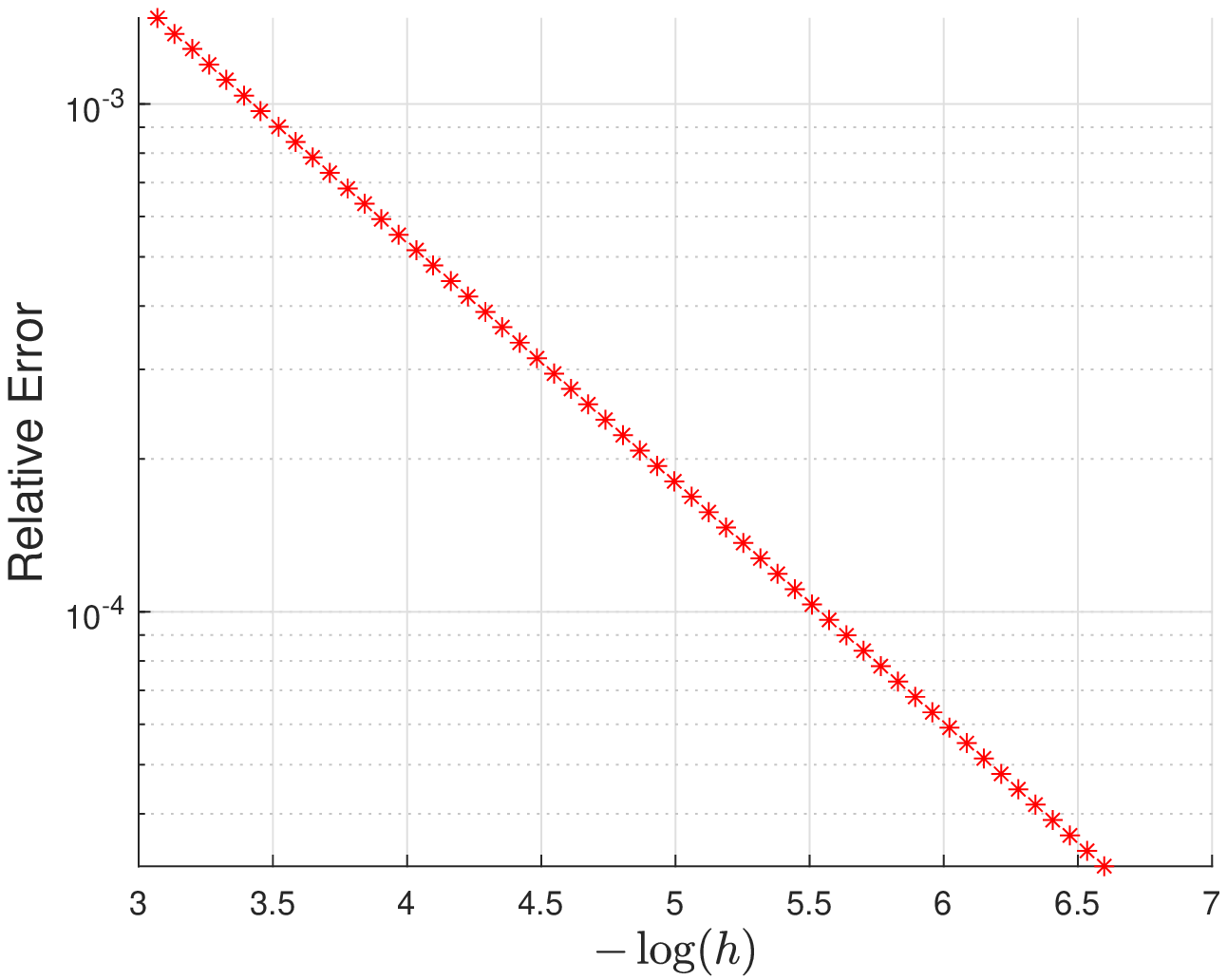}
    \end{tabular}
    \caption{Convergence performance at $r = \frac{1}{4}$ (left) and $r = \frac{11}{12}$ (right).}
    \label{fig:conv_r}
\end{figure}

\section{Conclusions}

In this paper, we addressed the convergence of minimizing discrete  conformal energy for finding the conformal mapping between two predicted surfaces $\cal M$ and $\cal N$ if the conformal mapping exists. The sufficient condition for the convergence on the triangulation is weak on one side. On the other side, it also predicts the risk of a discrete solution far from the true conformal mapping in the given vertices when the condition is not satisfied since error may be incredibly large. This issue can be addressed via modifying the triangulation. 

However, in numerical computation, there are two issues should be taken into account fully. 
Theoretically, the considered continuous mapping from ${\cal M}$ onto ${\cal N}$ should be bijective obviously. This one-to-one and on-to properties should be also preserved in the discrete model (\ref{discr conformal energy}). This implicit restriction, however, may be ignored or hard to be obeyed in a discrete solution generally. Practically, even if the triangulation is good, a discrete solution with a very small discrete conformal energy may be also far from the conformal mapping, when (1) the solution is degenerated because it does not preserve the on-to property or, (2) the solution multiply covers the target surface, which means it loses the one-to-one property. The one-to-one property may be also partially demolished in solving a discrete optimization problem transformed from the minimization of discrete conformal energy. For example, it is commonly used to determine interior points from boundary points via solving a linear system. In the case when the coefficient matrix is ill-conditioned, the solved interior points may be folded, which leads to a solution that is not one-to-one obviously.

The stable computation of minimizing discrete conformal energy is a worthy topic in this line. In our recent work in preparing \cite{}, we considered the stable computation for the disk parameterization of conformal mappings on open surfaces. The minimization problem of discrete conformal energy is modified, using some penalization strategies to address the three kinds of risks mentioned above. However, this modified model is hard to be transformed for closed surfaces. We will continue this work on closed surfaces.

\appendix

\section{Proof of conformal equivalence}\label{Appendix:CE}

Since $\mathcal{M}$ is parameterized by $\mathbf{x}=\mathbf{x}(u,v)$, the tangent space ${\cal T}_{\mathbf{x}}\cal M$ is spanned by $\nabla\mathbf{x}=[\mathbf{x}_u,\mathbf{x}_v]$.  
Similarly, the tangent space ${\cal T}_{\mathbf{y}}\cal M^*$ is spanned by $\nabla \mathbf{y}=[\mathbf{y}_u,\mathbf{y}_v]$ with $\mathbf{y}= f(\mathbf{x})$.
Hence, the representations 
\[
    x = \mathbf{x}_us+\mathbf{x}_vt
    \in {\cal T}_{\mathbf{x}}{\cal M},\quad
    y = \mathbf{y}_us+\mathbf{y}_vt
    \in{\cal T}_{\mathbf{x}}{\cal M^*},\quad
    \forall \ (s,t)\in  {\cal R}^2
\]
build a mapping between ${\cal T}_{\mathbf{x}}\cal M$ and ${\cal T}_{\mathbf{y}}\cal M^*$. That is,
\[
    y = \nabla\mathbf{y}[s, t]^\top
    = \nabla\mathbf{y}(\nabla\mathbf{x}^\top \nabla\mathbf{x})^{-1}\nabla\mathbf{x}^\top x
    = (\nabla_{\cal M}f )x.
\]
Assigning the orthogonal basis $Q$ and $G$ for the two tangent spaces, 
respectively, and using $x =  QQ^\top x$,
the coordinates $Q^\top x$ and $G^\top y$ satisfy the equation
\[
    G^\top y 
    = \big(G^\top \nabla_{\cal M}f Q\big)(Q^\top x)
    = J (Q^\top x),\quad
    J = G^\top \nabla_{\cal M}f Q.
\]
That is, $J$ is a linear transformation from ${\cal T}_{\mathbf{x}}\cal M$ to ${\cal T}_{\mathbf{y}}\cal M^*$. Hence, the area of $f(\mathcal{M}) = \mathcal{M}^*$ is
\[
    \mathcal{A}(f) = \int_{\mathcal{M}^*}\text{d}s_{\mathcal{M}^*}
    = \int_{\mathcal{M}}|\det(J)|\text{d}s_{\mathcal{M}}
    \leq \frac{1}{2}\int_{\mathcal{M}}\|J\|_F^2\text{d}\sigma
    \leq \frac{1}{2}\int_{\mathcal{M}}\|\nabla_{\!\cal M} f\|_F^2\text{d}\sigma
    =\mathcal{E}_D(f).
\]
Here we have used the inequality $|\det(A)|\leq \frac{1}{2}\|A\|_F^2$ for any $2\times2$ matrix, and $\|J\|_F\leq \|\nabla_{\!\cal M}f\|_F$.
Hence, $\mathcal{E}_C(f)=\mathcal{E}_D(f)-\mathcal{A}(f)\geq 0$. Clearly, $\mathcal{E}_C(f)=0$ if and only if $|\det(J)|=\frac{1}{2}\|J\|_F^2$ that means $J=\sigma H$ with a scale $\sigma\geq0$ and an orthogonal $H$, and $\|J\|_F=\|\nabla_{\!\cal M}f\|_F$ that means $\nabla_{\!\cal M}f = GJQ^\top = \sigma GHQ^\top$. Hence, 
\[
    \nabla f(\mathbf{x})^\top \nabla f(\mathbf{x})
    = \big(\nabla_{\!\cal M}f\nabla\mathbf{x}\big)^\top
    \big(\nabla_{\!\cal M}f\nabla\mathbf{x}\big)
    =\sigma^2\nabla\mathbf{x}^\top QQ^\top \nabla\mathbf{x}
    = \sigma^2\nabla\mathbf{x}^\top\nabla\mathbf{x}.
\]
That is, $f$ is conformal.

\section{Discretization of Dirichlet energy on $S_V$}\label{Appendix:DE S_V}

Let $S_V$ be triangle surface of ${\cal M}$ with given vertices $\{v_\ell\}$ and triangles $\{V_{ijk}\}$, and let $f$ be a piecewise linear mapping on $S_V$. That is, $f(v)$ is a linear mapping from $v\in V_{ijk}$ to ${\cal R}^n$. We can represent $f$ as
\[
    f(v) = P_{ijk} v+c_{ijk},\quad  \forall v\in V_{ijk},
\]
where $P_{ijk}$ is an $n\times m$ matrix, and $c_{ijk}\in {\cal R}^n$. By definition, in the triangle $V_{ijk}$, $\nabla_{S_V}f = P_{ijk}$.

Let $f_{ij} = f_i-f_j$ and $v_{ij} = v_i-v_j$. We have  $[f_{ij}, f_{jk}] = P_{ijk}[v_{ij},v_{jk}]$. It implies 
\begin{align}\label{P_ijk}
    P_{ijk}
    &= [f_{ij},f_{jk}]
    \big([v_{ij},v_{jk}]^\top[v_{ij},v_{jk}]\big)^{-1}[v_{ij},v_{jk}]^\top.
\end{align}
Using the equalities
\begin{align}
    &\big([v_{ij},v_{jk}]^\top[v_{ij},v_{jk}]\big)^{-1}
    = \frac{1}{(2A_{ijk})^2}[v_{jk},v_{ji}]^\top [v_{jk},v_{ji}], \label{eq:inv}\\
    &2A_{ijk} = \|v_{ki}\|\|v_{kj}\|\sin\beta_{ij}
    = \|v_{ik}\|\|v_{ij}\|\sin\beta_{jk} 
    = \|v_{jk}\|\|v_{ji}\|\sin\beta_{ki},\label{eq:A_ijk}
\end{align}
we get
\begin{align*}
    \|P_{ijk}\|_F^2 &=\langle P_{ijk},P_{ijk}\rangle
    =\big\langle [f_{ij}, f_{jk}]^\top[f_{ij}, f_{jk}],([v_{ij},v_{jk}]^\top[v_{ij},v_{jk}])^{-1}\big\rangle\\
    &=\frac{1}{4A_{ijk}^2}\big\langle [f_{ij},f_{jk}]^\top[f_{ij},f_{jk}], 
    [v_{jk},v_{ji}]^\top [v_{jk}, v_{ji}]\big\rangle
    = \frac{1}{4A_{ijk}^2}\|f_{ij}v_{jk}^\top+f_{jk}v_{ji}^\top\|_F^2\\
    &=\frac{1}{4A_{ijk}^2}\big(\|f_{ij}\|^2\|v_{jk}\|^2+\|f_{jk}\|^2\|v_{ji}\|^2
    +2f_{ij}^\top f_{jk} v_{jk}^\top v_{ji}\big).
\end{align*} 
By $f_{ki}=-f_{ij}-f_{jk}$,
$2f_{ij}^\top f_{jk}=\|f_{ki}\|^2 - \|f_{ij}\|^2-\|f_{jk}\|^2$. Substituting it into the above equality, we get
\begin{align*}
    \|P_{ijk}\|_F^2
    &=\frac{1}{4A_{ijk}^2}\big(\|f_{ij}\|^2(\|v_{jk}\|^2-v_{jk}^\top v_{ji})  
     +\|f_{jk}\|^2(\|v_{ji}\|^2-v_{jk}^\top v_{ji}) 
     + \|f_{ki}\|^2v_{jk}^\top v_{ji}\big)\\
    &=\frac{1}{4A_{ijk}^2}\big(\|f_{ij}\|^2v_{ki}^\top v_{kj} 
     +\|f_{jk}\|^2v_{ik}^\top v_{ij} +\|f_{ki}\|^2v_{jk}^\top v_{ji}\big)\\
    &=\frac{1}{2A_{ijk}}\big(\|f_{ij}\|^2\cot\beta_{ij}
     +\|f_{jk}\|^2\cot\beta_{jk} +\|f_{ki}\|^2\cot\beta_{ki}\big).
\end{align*}
Therefore, 
\begin{align*}
    \int_{S_V}\|\nabla_{S_V}f\|_F^2\text{d}\sigma
    &= \sum_{V_{ijk}}\|\nabla_{S_V}f\|_F^2A_{ijk} 
    = \sum_{V_{ijk}}\|P_{ijk}\|_F^2A_{ijk}\\
    &= \frac{1}{2}\sum_{V_{ijk}} 
    \big(
    \|f_{ij}\|^2\cot\beta_{ij}+\|f_{jk}\|^2\cot\beta_{jk} +\|f_{ki}\|^2\cot\beta_{ki}\big)\\
    &=\frac{1}{2} \sum_{ij}\omega_{ij}\|f_i-f_j\|^2.
\end{align*}

\section{Discrete weak solution of Laplace-Beltrami equation on $S_V$}\label{Appendix:weakLB}

The weak formulation \eqref{LB_weak} on $S_V$ is that for any $g$,
\begin{align}\label{LB_weakVS}
    \int_{S_V}\big\langle\nabla_{S_V}f,\nabla_{S_V}g\big\rangle\text{d}\sigma
    = \Big\langle \big(\frac{\partial}{\partial u}, 
        -\frac{\partial}{\partial v} \big),g \Big\rangle\Big|_p, 
\end{align}
where both $f$ and $g$ are piecewise linear on $S_V$, and $\nabla_{S_V}f = P_{ijk}$. For each vertex $v_i$, let ${\cal N}(i)$ be the set of triangles $\{V_{ijk}\}$  containing $v_i$ as one of its vertices. Consider a special piecewise linear function $g$ that maps $S_V$ to zero except these $\{V_{ijk}\}$, and for $v\in V_{ijk}$,
\[
    g(v) = G_{ijk}v+c_{ijk}, \quad g(v_i) = e_t,\ g(v_j)=g(v_k) = 0. 
\]
where $t=1$ or $t=2$, and $e_1=(1,0)^\top, e_2 = (0,1)^\top$, which gives $[e_t,0] = G_{ijk}[v_{ij},v_{jk}]$. Similar with the $P_{ijk}$, we also have
$G_{ijk}=[e_t,0]\big([v_{ij},v_{jk}]^\top[v_{ij},v_{jk}]\big)^{-1}[v_{ij},v_{jk}]^\top$ and 
$G_{ijk}[v_{ij},v_{jk}]=[e_t,0]$.
By (\ref{eq:inv}), (\ref{eq:A_ijk}), $f_{jk} = -f_{ki}-f_{ij}$, and $v_{jk} = -v_{ki}-v_{ij}$,
\begin{align*}
    \langle P_{ijk},G_{ijk}\rangle 
    &= \big\langle [f_{ij},f_{jk}]\big([v_{ij},v_{jk}]^\top[v_{ij},v_{jk}]\big)^{-1}, 
      [e_t,0]\big\rangle
    = \frac{\big\langle [f_{ij},f_{jk}][v_{jk},v_{ji}]^\top[v_{jk},v_{ji}], 
        [e_t,0]\big\rangle}
    {4A_{ijk}^2}\\
    &=\frac{e_t^\top (f_{ij} v_{jk}^\top v_{jk}+f_{jk}v_{ji}^\top v_{jk})}
    {4A_{ijk}^2}
    =\frac{e_t^\top (f_{ij}\cot\beta_{ij}-f_{ki} \cot\beta_{ki})}{2A_{ijk}}.
\end{align*}

Let $Q$ be an orthogonal basis $T_{ijk}$, and represent $v\in V_{ijk}$ as $v = Q[u_1,u_2]^\top$. By definition, 
\begin{align*}
    \Big\langle \big(\frac{\partial}{\partial u_1}, 
        -\frac{\partial}{\partial u_2} \big),g \Big\rangle
    &= \frac{\partial}{\partial u_1}e_1^\top\big(G_{ijk}Q[u_1,u_2]^\top+c_i\big)
        -\frac{\partial}{\partial u_2}e_2^\top\big(G_{ijk}Q[u_1,u_2]^\top+c_i\big)\\
    &= e_1^\top G_{ijk}Qe_1-e_2^\top G_{ijk}Q e_2.
\end{align*}
We choose $Q = [v_{ij},v_{jk}]\big([v_{ij},v_{jk}]^\top [v_{ij},v_{jk}]\big)^{-1/2}$.
Then, by (\ref{eq:inv}), 
\begin{align} \label{eq:-12}
    G_{ijk}Q = e_te_1^\top\big([v_{ij},v_{jk}]^\top [v_{ij},v_{jk}]\big)^{-1/2}
    = \frac{e_te_1^\top}{2A_{ijk}}\big([v_{jk},v_{ji}]^\top [v_{jk},v_{ji}]\big)^{1/2}
    = \frac{e_t}{2A_{ijk}}(a_i, b_i),
\end{align}
where  
$
    a_i =\frac{\|v_{ij}\|^2+2A_{ijk}}{\sqrt{\|v_{ij}\|^2+\|v_{jk}\|^2+4A_{ijk}}},\quad
    b_i = \frac{\|v_{ij}\|\|v_{jk}\|\cos\beta_{ki}}
        {\sqrt{\|v_{ij}\|^2+\|v_{jk}\|^2+4A_{ijk}}}.
$ 
Hence,
\begin{align*}
    \Big\langle \big(\frac{\partial}{\partial u}, 
        -\frac{\partial}{\partial v} \big),g \Big\rangle
    &= \left\{\begin{array}{cc}
         \frac{a_i}{2A_{ijk}}, & {\rm if}\ t=1; \\
         \frac{-b_i}{2A_{ijk}}, & {\rm if}\ t=2.
    \end{array}\right.
\end{align*}
Since $\int_{S_V}\big\langle\nabla_{S_V}f,\nabla_{S_V}g\big\rangle\text{d}\sigma
    = \sum_{V_{ijk}\in{\cal N}(i)}\langle P_{ijk},G_{ijk}\rangle A_{ijk}$, the equality (\ref{LB_weakVS}) becomes
\begin{align}\label{f:i}
    \sum_{V_{ijk}\in{\cal N}(i)}w_{ij}f_j
    =\sum_{V_{ijk}\in{\cal N}(i)}\Big\{f_{ij}\cot\beta_{ij}-f_{ki}\cot\beta_{ki}\Big\}
    = \left\{\begin{array}{cl}
         0, & p\notin V_{ijk}, \\
       \frac{(a_i, -b_i)^\top}{2A_{ijk}},
        & p\in V_{ijk}.
    \end{array}\right.
\end{align}
It yields a linear system $L{\bf f} = {\bf b}$, where $\mathbf{f}$ is an $n_V\times 2$ matrix of rows $\{f_i^\top\}$, ${\bf b}$ is an $n_V \times 2$ matrix with three nonzero rows $\frac{(a_i, -b_i)}{2A_{ijk}},\frac{(a_j, -b_j)}{2A_{ijk}},\frac{(a_k, -b_k)}{2A_{ijk}}$ for $V_{ijk}$ containing $p$, and $L$ is the Laplacian matrix of order $n_V$ as before.

\section{Discretization of Beltrami equation in the unit disk}\label{Appendix:BE D}

Given a complex function $\mu$ defined in the unit disk $D$ and $\|\mu\|_\infty<1$, the Beltrami equation is commonly given in the complex form
\begin{align}\label{BE:complex}
    \frac{\partial g}{\partial \bar z} = \mu\frac{\partial g}{\partial z},\quad 
    |z|\leq 1,
\end{align}
where $z = x+\sqrt{-1}y = x+\imath y$ is the complex form of the 2D point $\mathbf{x} = (x,y)^\top$, and $g = g_1+\imath g_2$ is also taken as a complex function defined in $D$, where both $g_1$ and $g_2$ are real. Without confusions, we also take $g$ as a real 2D mapping: $g(\mathbf{x}) = (g_1(\mathbf{x}),g_2(\mathbf{x}))^\top$ as we commonly used in this paper. 

The Beltrami equation can be rewritten in a real form. Practically, by definition of Wirtinger derivatives,
\begin{align*}
    \frac{\partial g}{\partial z}
    &=\frac{1}{2}\Big(\frac{\partial g}{\partial x}-
        \imath \frac{\partial g}{\partial y}\Big)
    = \frac{1}{2}\Big(\big(\frac{\partial g_1}{\partial x}
        +\frac{\partial g_2}{\partial y}\big)
        +\imath\big(\frac{\partial g_2}{\partial x}
        -\frac{\partial g_1}{\partial y}\big)\Big),\\
    \frac{\partial g}{\partial \bar z}
    &=\frac{1}{2}\Big(\frac{\partial g}{\partial x}
        +\imath \frac{\partial g}{\partial y}\Big)
    = \frac{1}{2}\Big(\big(\frac{\partial g_1}{\partial x}
        -\frac{\partial g_2}{\partial y}\big)
        +\imath\big(\frac{\partial g_2}{\partial x}
        +\frac{\partial g_1}{\partial y}\big)\Big).
\end{align*}
Hence, writing $\mu = \mu_1+\imath \mu_2$ with real functions $\mu_1$ and $\mu_2$, the Beltrami equation (\ref{BE:complex}) has the real form
\begin{align*}
    \frac{\partial g_1}{\partial x}
        -\frac{\partial g_2}{\partial y}
    &= \mu_1\big(\frac{\partial g_1}{\partial x}
        +\frac{\partial g_2}{\partial y}\big)
    -\mu_2\big(\frac{\partial g_2}{\partial x}
        -\frac{\partial g_1}{\partial y}\big),\\
    \frac{\partial g_2}{\partial x}
        +\frac{\partial g_1}{\partial y}
    &=\mu_1\big(\frac{\partial g_2}{\partial x}
        -\frac{\partial g_1}{\partial y}\big)
        +\mu_2\big(\frac{\partial g_1}{\partial x}
        +\frac{\partial g_2}{\partial y}\big).
\end{align*}
Equivalently, letting $\frac{\partial g}{\partial x}$ and $\frac{\partial g}{\partial y}$ be the partial derivatives of $g = (g_1,g_2)^\top$ with respective to $x$ and $y$,
\begin{align}\label{BE:2D}
    \Big((1-\mu_1)I+\mu_2J\Big)\frac{\partial g}{\partial x}
    =\Big(\mu_2I+(1+\mu_1)J\Big)\frac{\partial g}{\partial y},\quad
    I = \Big[{1\atop 0}\ \ {0\atop 1}\Big],\quad
    J = \Big[{0\atop -1}\ \ {1\atop 0}\Big].
\end{align}

Rewriting (\ref{BE:2D}) as 
\[
    \Big[\frac{\partial g}{\partial x},\frac{\partial g}{\partial y}\Big]
    \Big[{1-\mu_1\atop -\mu_2}\Big]
    =J
    \Big[\frac{\partial g}{\partial x},\frac{\partial g}{\partial y}\Big]
    \Big[{-\mu_2\atop 1+\mu_1}\Big]
    \quad{\rm or}\quad
    \Big[\frac{\partial g}{\partial x},\frac{\partial g}{\partial y}\Big]
    \Big[{-\mu_2\atop 1+\mu_1}\Big]
    =J
    \Big[\frac{\partial g}{\partial x},\frac{\partial g}{\partial y}\Big]
    \Big[{\mu_1-1\atop \mu_2}\Big],
\]
and coupling them together, we get
\[
    \Big[\frac{\partial g}{\partial x},\frac{\partial g}{\partial y}\Big]
    \Big[{1-\mu_1\atop -\mu_2}\  {-\mu_2\atop 1+\mu_1}\Big]
    = J
    \Big[\frac{\partial g}{\partial x},\frac{\partial g}{\partial y}\Big]
    \Big[{-\mu_2\atop 1+\mu_1}\ {\mu_1-1\atop \mu_2}\Big].
\]
Equivalently,
\begin{align}\label{BE:2D+}
    \Big[\frac{\partial g}{\partial x},\frac{\partial g}{\partial y}\Big]
    B
    =J
    \Big[\frac{\partial g}{\partial x},\frac{\partial g}{\partial y}\Big],
\end{align}
where 
\[
    B = \Big[{1-\mu_1\atop -\mu_2}\  {-\mu_2\atop 1+\mu_1}\Big]
    \Big[{-\mu_2\atop 1+\mu_1}\ {\mu_1-1\atop \mu_2}\Big]^{-1}
    = \begin{bmatrix}
     \frac{2\mu_2}{1-\mu_1^2-\mu_2^2}&  
     \frac{(1-\mu_1)^2+\mu_2^2}{1-\mu_1^2-\mu_2^2}\\
    \frac{-(1+\mu_1)^2-\mu_2^2}{1-\mu_1^2-\mu_2^2}& 
    \frac{-2\mu_2}{1-\mu_1^2-\mu_2^2}
    \end{bmatrix}.
\]

A discrete approach is to discretize $\mu(\mathbf{x})$ as $(\mu'_{ijk},\mu''_{ijk})$ in $V_{ijk}$ that yields a constant matrix $b_{ijk}$ of $B$ in $V_{ijk}$, and choose a continuous and piecewise linear function 
$g^h(\mathbf{x}) 
    = c_{ijk}+ G_{ijk}\mathbf{x}$ 
for $\mathbf{x} = (x,y)^\top\in V_{ijk}$,
where $c_{ijk}\in {\cal R}^2$ and $G_{ijk}\in{\cal R}^{2\times2}$ are constant.
Since $\Big[\frac{\partial g^h}{\partial x},\frac{\partial g^h}{\partial y}\Big] = G_{ijk}$ in $V_{ijk}$, and 
\begin{align}\label{G_ijk}
    G_{ijk}
    = [g_{ij},g_{jk}][v_{ij},v_{jk}]^{-1}
    = -\frac{1}{2A_{ijk}}(g_{ij}v_{jk}^\top-g_{jk}v_{ij}^\top)J,
\end{align}
the Beltrami equation (\ref{BE:2D+}) is then discretized as
\begin{align}\label{BE:2D disc}
    G_{ijk}b_{ijk}=JG_{ijk},\quad \forall V_{ijk}.
\end{align}

Since $G_{ijk}v_{jk} = g_{jk}$ by (\ref{G_ijk}), and $\sum_{V_{ijk}\in{\cal N}(i)}v_{jk} = 0$ for each inner point $v_i$. Hence, 
\begin{align*}
    0 &= J\sum_{V_{ijk}\in{\cal N}(i)}G_{ijk}v_{jk}
    = \sum_{V_{ijk}\in{\cal N}(i)}G_{ijk}b_{ijk}v_{jk}
    =\sum_{V_{ijk}\in{\cal N}(i)} \frac{1}{2A_{ijk}}(g_{ij}v_{jk}^\top-g_{jk}v_{ij}^\top)\hat v_{jk}\\
    &=\sum_{V_{ijk}\in{\cal N}(i)}\Big\{
    \frac{v_{jk}^\top\hat v_{jk}}{2A_{ijk}}g_i
    +\frac{v_{ki}^\top\hat v_{jk}}{2A_{ijk}}g_j
    +\frac{v_{ij}^\top\hat v_{jk}}{2A_{ijk}}g_k
    \Big\}, 
\end{align*}
where $\hat v_{jk} =-Jb_{ijk}v_{jk}$. This is a linear system $L\mathbf{g}= 0$, where $L\in{\cal R}^{n_{V_I}\times n_V}$ with the number $n_{V_I}$ of inner vertices and the number $n_V$ of all vertices,  $\mathbf{g}\in{\cal R}^{n_V\times2}$ has the rows $\{g_\ell^\top\}$. The entries $\{w_{ij}\}$ of $L$ under the indexing of vertices are
\begin{align*}
    w_{ij} = \left\{\begin{array}{ll}
    \frac{1}{2}\Big\{\frac{v_{ki}^\top\hat v_{jk}}{2A_{ijk}}
    +\frac{v_{ji}^\top\hat v_{k'j}}{2A_{ik'j}}\Big\},& V_{ijk}\in {\cal N}(i);\\
    0, & V_{ijk}\notin {\cal N}(i);
    \end{array}\right.\quad j\neq i,\quad
    w_{ii} = -\sum_{j}w_{ij}.
\end{align*}
That is the same as that given in \cite{LMKC13}. Notice that the boundary points $\mathbf{g}_B$ of $\mathbf{g}$ should be given. That is, it is an inhomogeneous linear system for inner points $\mathbf{g}_I$: $L_I\mathbf{g}_I = -L_B\mathbf{g}_B$.

\bibliographystyle{siamplain}
\bibliography{reference}

\begin{thebibliography}{10}

\bibitem{LBPS02}
{\sc L.~Bruno, P.~Sylvain, R.~Nicolas, and M.~J{\'e}rome}, {\em Least squares
  conformal maps for automatic texture atlas generation}, ACM Transactions on
  Graphics, 21 (2002), pp.~362--371,
  \url{https://doi.org/10.1145/566654.566590}.

\bibitem{GPLM18}
{\sc G.~P.-T. Choi and L.~M. Lui}, {\em A linear formulation for disk conformal
  parameterization of simply-connected open surfaces}, Advances in
  Computational Mathematics, 44 (2018), pp.~87--114,
  \url{https://doi.org/10.1007/s10444-017-9536-x}.

\bibitem{PTKC15}
{\sc P.~T. Choi, K.~C. Lam, and L.~M. Lui}, {\em Flash: Fast landmark aligned
  spherical harmonic parameterization for genus-0 closed brain surfaces}, SIAM
  Journal on Imaging Sciences, 8 (2015), pp.~67--94,
  \url{https://doi.org/10.1137/130950008}.

\bibitem{PTLM15}
{\sc P.~T. Choi and L.~M. Lui}, {\em Fast disk conformal parameterization of
  simply-connected open surfaces}, Journal of Scientific Computing, 65 (2015),
  pp.~1065--1090, \url{https://doi.org/10.1007/s10915-015-9998-2}.

\bibitem{XFYF15}
{\sc X.~Fan, Y.~Feng, Z.~Chai, X.~D. Gu, and Z.~Luo}, {\em Image morphing with
  conformal welding}, The Visual Computer, 32 (2015), pp.~1191--1203,
  \url{https://doi.org/10.1007/s00371-015-1188-6}.

\bibitem{XGYW04}
{\sc X.~Gu, Y.~Wang, T.~F. Chan, P.~M. Thompson, and S.-T. Yau}, {\em Genus
  zero surface conformal mapping and its application to brain surface mapping},
  IEEE Transactions on Medical Imaging, 23 (2004), pp.~949--958,
  \url{https://doi.org/10.1109/TMI.2004.831226}.

\bibitem{XGST20book}
{\sc X.~Gu and S.-T. Yau}, {\em Computational Conformal Geometry},
  International Press and Higher Education Press, 2020.

\bibitem{SHSA00}
{\sc S.~Haker, S.~Angenent, A.~Tannenbaum, R.~Kikinis, G.~Sapiro, and
  M.~Halle}, {\em Conformal surface parameterization for texture mapping}, IEEE
  Transactions on Visualization and Computer Graphics, 6 (2000), pp.~181--189,
  \url{https://doi.org/10.1109/2945.856998}.

\bibitem{Hutc91}
{\sc J.~E. Hutchinson}, {\em Computing conformal maps and minimal surfaces},
  Proceedings of the Centre for Mathematics and its Applications, 26 (1991),
  pp.~140--161.

\bibitem{EK91book}
{\sc E.~Kreyszig}, {\em Differential Geometry}, Dover Publications, 1991.

\bibitem{YCWW21}
{\sc Y.-C. Kuo, W.-W. Lin, M.-H. Yueh, and S.-T. Yau}, {\em Convergent
  conformal energy minimization for the computation of disk parameterizations},
  SIAM Journal on Imaging Sciences, 14 (2021), pp.~1790--1815,
  \url{https://doi.org/10.1137/21M1415443}.

\bibitem{LMKC13}
{\sc L.~M. Lui, K.~C. Lam, T.~W. Wong, and X.~Gu}, {\em Texture map and video
  compression using beltrami representation}, SIAM Journal on Imaging Sciences,
  6 (2013), pp.~1880--1902, \url{https://doi.org/10.1137/120866129}.

\bibitem{UPKP93}
{\sc U.~Pinkall and K.~Polthier}, {\em Computing discrete minimal surfaces and
  their conjugates}, Experimental Mathematics, 2 (1993), pp.~15--36.

\bibitem{DRoland08}
{\sc D.~Roland}, {\em Introduction to The Geometry of Complex Numbers}, Dover
  Publications, Inc., 2008.

\bibitem{MHWW17}
{\sc M.-H. Yueh, W.-W. Lin, C.-T. Wu, and S.-T. Yau}, {\em An efficient energy
  minimization for conformal parameterizations}, Journal of Scientific
  Computing, 73 (2017), pp.~203--227,
  \url{https://doi.org/10.1007/s10915-017-0414-y}.

\end{thebibliography}
\end{document}